\def \defn#1{{\em{#1}}}
\def \sf{\mathop{\rm SF}\nolimits}
\def \gf{\mathop{\rm GF}\nolimits}
\def \mf{\mathop{\rm MF}\nolimits}
\def \tmf{\mathop{\rm TMF}\nolimits}
\def \dis{\mathop{\rm Dis}\nolimits}
\def \cl#1{\mathop{{\rm {Cl}}\left(#1\right)}\nolimits}
\def \interior#1{\mathop{{\rm {Int}}\left(#1\right)}\nolimits}
\def \notemp{\mathop{\beta}\nolimits}
\def \dim{\mathop{\rm dim}\nolimits}
\def \C{{\mathbb C}}
\def \AA{{\cal A}}
\def \LL{{\cal L}}
\def \CC{{\cal C}}
\def \PP{{\cal P}}
\def \SS{{\cal S}}
\newtheorem{theorem}{Theorem}[section]
\newtheorem{corollary}[theorem]{Corollary}
\newtheorem{proposition}[theorem]{Proposition}
\newtheorem{lemma}[theorem]{Lemma}
\newtheorem{remark}[theorem]{Remark}
\newtheorem{remarks}[theorem]{Remarks}
\newtheorem{examples}[theorem]{Examples}
\newtheorem{definition}[theorem]{Definition}
\newenvironment{proof}
 {\begin{trivlist} \item[\hskip \labelsep {\bf Proof.}]}
 {\hfill$\Box$\end{trivlist}}
\begin{document}
\renewcommand{\theenumi}{(\roman{enumi})}
\normalsize

\title{Equisingularity and The Euler Characteristic of a Milnor Fibre} 
\author{Kevin Houston \\
School of Mathematics \\ University of Leeds \\ Leeds, LS2 9JT, U.K. \\
e-mail: k.houston@leeds.ac.uk \\
http://www.maths.leeds.ac.uk/$\sim $khouston/
}
\date{\today }
\maketitle
\begin{abstract}
We study the Euler characteristic of the Milnor fibre of a hypersurface singularity.
This invariant is given in terms of the Euler characteristic of a fibre in between the original singularity and its Milnor fibre and in terms of the Euler characteristics associated to strata of the in-between fibre.

From this we can deduce a result of Massey and Siersma regarding singularities with a one-dimensional critical locus.
The result is also applied to the study of equisingularity.
The famous Brian\c{c}on-Speder-Teissier result states that a family of isolated hypersurface singularities is equisingular if and only if its $\mu ^*$-sequence is constant. We show that if a similar sequence for a family of corank 1 complex analytic mappings from $n$-space to $n+1$-space is constant, then the image of the family of mappings is equisingular. For families of corank 1 maps from $3$-space to $4$-space we show that the converse is true also.

AMS MSC2000 classification: 14B07, 32S55, 32S15, 32S30.

Keywords: Euler characteristic, Milnor fibre, equisingularity. 
\end{abstract}

% Introduction
\section{Introduction}
The Euler characteristic is the most basic and, possibly, most powerful invariant in topology. 
It is well known to be crude but extremely effective.
A powerful invariant in the study of complex analytic hypersurface singularities is the Milnor fibre. In this paper the main results on equisingularity follow from careful study of the Euler characteristic of a certain Milnor fibre.

For an isolated singularity Milnor showed that the Milnor fibre is homotopically equivalent to a bouquet of spheres and hence the number of these spheres is called the Milnor number. This number has been incredibly successful in the study of singularities as it can be defined by topological and algebraic means.
 In the case of non-isolated singularities not many general theorems are known since the Milnor fibre may not be a bouquet of spheres and a good link between the algebra and topology has yet to be found. See \cite{num-control} for the state of the art. Interestingly, two of the earliest general results were published in the same conference proceedings. One is the Kato-Matsumoto result that relates the vanishing of homotopy groups of the Milnor fibre to the dimension of the singular set of the singularity, \cite{km}. The other, by Sakamoto, expresses the homotopy type of the Milnor fibre of the direct sum or direct product of two singularities in terms of Milnor fibres of the constituent singularities, \cite{sakamoto}.

Even more surprisingly, a number of the results that are known for particular examples of singularities, see \cite{hn, nemethi2,nemethi} and references therein, are for the homotopy type of the Milnor fibre rather than for its Euler characteristic.  One would assume it is easier to prove results for the latter as it is simpler.

In this paper we apply some general theorems on the Euler characteristic of the Milnor fibre of a non-isolated hypersurface singularity. Our method is to perturb the singularity to produce a fibre that is in some sense in between the original singularity and the Milnor fibre. Using the topology of this fibre and the topology of its stratification we shall be able to calculate the Euler characteristic of the Milnor fibre. The proof involves making small changes to results in the existing literature on vanishing cycles.

Taking an in-between fibre is entirely natural.
For example, the singularities of the in-between fibre may be a smoothing of the singularities of the original. This can be seen in the Generalized Zariski examples in Section~\ref{sec:gen-zariski}. It was the main method used by Siersma and his students, see the references in \cite{hn, nemethi2,nemethi}, where the singular set of the hypersurface is assumed to be an isolated complete intersection singularity. The in-between fibre's singular set is the Milnor fibre of this ICIS. 

Another example of an in-between fibre is of great importance in the study of equisingularity. Here the hypersurface singularity is the image of a map $f$ with an isolated instability. We can perturb $f$ so that the image of the perturbation has only stable singularities. This image is called the disentanglement of $f$ and plays the part of the in-between fibre. 
In general, the disentanglement has non-isolated singularities (as does $f$) and is homotopically equivalent to a wedge of spheres.

An obvious and interesting question is what is the relationship between the topology of the disentanglement of $f$ and the Milnor fibre of the image of $f$. Certainly, David Mond and the author discussed this as long ago as 1993. An answer was not pressing as there seemed to be no obvious problems that could be solved with it. However, recent research, see \cite{diswe} and \cite{cntocn+1}, indicates that the precise relationship is extremely useful in the study of equisingularity. By providing such a relationship we prove Theorem~\ref{mainequithm} on equisingularity of mappings.

The paper is arranged as follows.
Section~\ref{sec:notation} defines the notation and basic concepts and contains the central result from which all the others follow. A generalization of the Milnor fibration is introduced. In it, there is a special fibre (which is the in-between fibre in applications) and a general fibre (which is homeomorphic to the Milnor fibre in applications). Theorem~\ref{maineulerthm} relates the difference of the Euler characteristics of the general fibre and special fibre to the Euler characteristics of strata in the special fibre and the Euler characteristics of the Milnor fibres of singularities transverse to these strata. 

In the next section we see how a generalization of a result of Massey and Siersma from \cite{ms} on hypersurfaces with a $1$-dimensional critical locus can be deduced from the main theorem. We also see how the theorem can be used to study the topology of composed singularities.

Section~\ref{sec:dis} contains the main result of interest that relates the Euler characteristic of the disentanglement of a complex analytic map-germ $f:(\C ^n ,0)\to (\C ^p,0)$, with $n\geq p-1$ and an isolated instability, to the Milnor fibre of the function that defines the discriminant of the map in terms of a stratification of the discriminant and its transverse Milnor fibres.
Theorem~\ref{firstlinsum} shows how to calculate the Euler characteristic of the closure of a stratum in the image of the disentanglement using the Milnor numbers of the multiple point spaces of $f$.

In the next section we apply this to the special case where $n=3$, $p=4$ and the map has corank $1$. Here we can describe precisely the topology of closure of strata using the Milnor numbers of the multiple point spaces of $f$. The transverse Milnor fibres have Euler characteristic $0$ except in the case of one stratum.
In the following section we see a very particular example of a map 
$f:(\C ^n ,0)\to (\C ^n,0)$ such that, outside of the origin, the singularities of the discriminant are at worst cuspidal. 

In Section~\ref{sec:equising} the results are applied to equisingularity. That is we unfold a map with an isolated instability and attempt to stratify this unfolding so that the parameter axis in the image is a stratum. The main result here is that using Damon's higher multiplicities we can achieve a Brianc\c{c}on--Speder-Teissier type result in that constancy of a $\mu ^*$-sequence implies that the axis is a stratum in a Whitney stratification.

The final section finishes with conclusions and some thoughts on future work. In particular, the converse of the above theorem relies on the value of the Euler characteristic of the Milnor fibre of the image of a corank $1$ map. Finding this value is an open problem and would be a good test of the mathematical machinery developed for non-isolated singularities so far.

The author began this paper while he was a visitor at 
Universit\'e des Sciences et Technologies de Lille, France. He is grateful to Mihai Tib\u{a}r and
Arnaud Bodin for their generous hospitality and USTL for funding.

\section{Quasi-Milnor fibrations}
\label{sec:notation}

We shall now define a generalization of the Milnor fibration.
Suppose that $h:U\to \C$ is a complex analytic map from the open set $U\subseteq \C ^n$ such that $n\geq 1$ and $0\in U$.
(In applications $h^{-1}(0)$ within a small ball will define a fibre in between a singularity and its Milnor fibre.)

Let $\SS '$ be a Whitney stratification of $h^{-1}(0)$ in $U$ so that each stratum is connected.
For any subset $Z$ of $U$ we define $\overline{Z}$ to be the closure of $Z$ in $U$.

Let $B_\epsilon (0)$ denote the open ball of radius $\epsilon $ centred at $0$ and let $S_\epsilon (0)$ denote the boundary of its closure, the sphere of radius $\epsilon $ centred at $0$. Let $D_\eta $ be the open disc in $\C $ centred at $0$ with radius $\eta $.

\begin{definition}
We say that $h$ is a {\defn{quasi-Milnor fibration}} if there exists $\epsilon >0$ and $\eta >0$ so that  
\begin{enumerate}
\item $B_\epsilon (0) \subset U$,
\item $S_\epsilon (0) \subset U$,
\item $S_\epsilon (0)$ is transverse to all strata of $h^{-1}(0)$, 
\item $h|\overline{B_\epsilon(0)}\cap h^{-1}(D_\eta ) \to D_\eta $ is a proper map,
\item $h|B_\epsilon(0)\cap h^{-1}(D_\eta ) \to D_\eta $ and
$h|S_\epsilon(0)\cap h^{-1}(D_\eta ) \to D_\eta $ have only a finite number of critical values in $D_\eta $, and the set of those arising from $S_\epsilon (0)$ is contained in the set of those of $B_\epsilon $,
\item the induced Whitney stratification of $\overline{B_\epsilon (0)\cap h^{-1}(0)}$ has only a finite number of strata.
\end{enumerate}
\end{definition}

\begin{definition}
The {\defn{general fibre}} of $h$, denoted $\gf $, is the set $h^{-1}(t)\cap B_\epsilon (0)$ where $t\in D_\eta $ is not a critical value of $h|B_\epsilon(0)\cap h^{-1}(D_\eta )$. 

The {\defn{special fibre}} of $h$, denoted $\sf $, is the set $h^{-1}(0)\cap B_\epsilon (0)$. 
\end{definition}
The main result of this section allows a comparison of the Euler characteristic of the special and general fibres.

Note that is not necessary that $0\in \sf $ which is the case for a Milnor fibration. Furthermore, note that by Sard's theorem and Thom's First Isotopy Lemma we can deduce that the general fibre is a manifold which is well-defined up to homeomorphism.

If in addition to the above conditions we assume that we have transversality of $S_{\epsilon '}(0)$ for all $0<\epsilon ' <\epsilon $ (and so $0\in \sf$), then we get that $\gf $ is the Milnor fibre of $h$ and $\sf$ is the Milnor cone (i.e., is homeomorphic to the cone over the real link of the singularity at the origin).
In our applications we shall not have this extra condition. The idea is that $\gf$, the general fibre, is homeomorphic to the Milnor fibre of a function, $f$ say, $h$ is some perturbation of $f$ with nice properties and $\sf$ is some set, again with nice properties. In some sense, $\sf $ lies in between the Milnor cone and the Milnor fibre of $f$ - a non-singular set - in terms of the complexity of its singularities.

If $Z$ is a subset of $B_\epsilon (0)$, then define $\cl{Z}$ to be the closure of $Z$ in $B_\epsilon (0)$. We define $\interior{Z}$ to be the interior of $Z$ in $B_\epsilon (0)$ and define $\partial Z$ to be $\overline{\cl{Z}}\backslash \cl{Z}$. Thus $\partial Z$ is the boundary of $Z$ that lies in the sphere $S_\epsilon (0)$.
If $Z$ is closed %(STRATIFIED?) 
in $B_\epsilon (0)$, then $\overline{Z} = Z \cup \partial Z$.

\begin{definition}
We shall say that a Whitney stratified space $Z\subset U $ is {\em{collared}} in $\overline{B_\epsilon (0)}$ if the inclusion $Z\cap B_\epsilon (0)$ into $Z\cap \overline{B_{\epsilon }(0)}$ is a homotopy equivalence.
\end{definition}

The condition that $S_\epsilon (0)$ is transverse to $h^{-1}(0)$ means that the general fibre is in fact collared in $\overline{B_\epsilon (0)}$. 
\begin{examples}
The sets $\sf $, $\gf $ and closure of strata of $\SS $ in $\overline{B_\epsilon (0)}$ are all collared.  

Thus, in particular, as is well-known, the Milnor fibre and Milnor cone of any complete intersection singularity are collared.
\end{examples}

For convenience we say that $Z$ is collared if $\partial Z=\emptyset $. For example $Z$ could be a point.

For a quasi-Milnor fibration let $\SS $ be the stratification of $B_\epsilon (0)\cap h^{-1}(0)$ induced from $\SS '$. 

Let $X$ be a stratum in $\SS $ and let $x$ be any point of $X$. 
For a manifold $T$ transverse to $X$ with $T\cap X$ a single point let $\tmf _{X,x}$ be the open Milnor fibre of $h|T\cap B_\epsilon (0)$ at $x\in X$. Since the topological type of a neighbourhood of a stratum is independent of the point in the stratum and by a result of L\^{e} (\cite{le}) the homotopy type of this set is independent of the $x$ chosen in $X$. 
\begin{definition}
The {\defn{transverse Milnor fibre of $X$}}, denoted $\tmf _X$, is the homotopy class of $\tmf _{X,x}$.
\end{definition}
Note that the transverse Milnor fibre and its closure are homotopically equivalent as the closure in $U$ is collared in the sense above.

Let $\chi (Z)$ denote the Euler characteristic of the set $Z$, $\widetilde{\chi}(Z)$ its reduced Euler characteristic and $\chi _c(Z)$ denote the Euler characteristic with compact support of $Z$. Note that $\chi _c(Z)$ coincides with $\chi (Z)$ when $Z$ is compact. Note also that $\chi _c$ is additive in locally compact spaces and so will be additive in our examples. By additive we mean that if $Z=Z_1 \cup Z_2$ where $Z_2$ is closed in $Z$ and $Z_1\cap Z_2=\emptyset $, then $\chi _c \left( Z \right) = \chi _c \left( Z_1 \right) + \chi _c \left( Z_2 \right)$. See, for example, \cite{iversen} p185.

% Main results on Euler char

\label{sec:comparison}
The next lemma is probably well-known but I cannot locate a suitable reference.
\begin{lemma}
\label{collared}
Suppose that $Z$ is the closure in $B_\epsilon (0)$ of a stratum in $\SS $.
Then $\chi _c(Z) = \chi (Z) $.
\end{lemma}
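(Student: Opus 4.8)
The plan is to compare the two Euler characteristics by passing to the compact closure $\overline{Z}=Z\cup \partial Z$ (recall $Z=\cl{X}$ is closed in $B_\epsilon (0)$, so $\overline{Z}=Z\cup\partial Z$ with $\partial Z\subset S_\epsilon (0)$), where $\chi$ and $\chi _c$ are forced to agree, and then to control the correction coming from the boundary part $\partial Z$. First I would use that $Z$ is collared in $\overline{B_\epsilon (0)}$, which is exactly the content of the Examples above: the inclusion $Z\hookrightarrow \overline{Z}$ is then a homotopy equivalence, so $\chi (Z)=\chi (\overline{Z})$. Since $\overline{Z}$ is a closed and bounded, hence compact, subset of $\C ^n$, its ordinary and compactly supported Euler characteristics coincide, giving $\chi (\overline{Z})=\chi _c(\overline{Z})$.

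Next I would apply additivity of $\chi _c$. Writing $\overline{Z}=Z\sqcup \partial Z$, where $Z=\overline{Z}\cap B_\epsilon (0)$ is open in $\overline{Z}$ and $\partial Z$ is therefore closed in $\overline{Z}$, additivity yields
\[
\chi _c(\overline{Z})=\chi _c(Z)+\chi _c(\partial Z).
\]
Combined with the previous paragraph this reduces the lemma to the single vanishing statement $\chi _c(\partial Z)=0$. As $\partial Z=\overline{Z}\cap S_\epsilon (0)$ is compact, this is the same as $\chi (\partial Z)=0$.

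This vanishing is the crux and the step I expect to be the main obstacle, and it is here that the complex analytic nature of the strata must be used. The set $\overline{Z}$ is complex analytic and, by condition (iii) in the definition of a quasi-Milnor fibration, $S_\epsilon (0)$ is transverse to all of its strata, so $\partial Z$ is the transverse slice of the positive-dimensional complex analytic set $\overline{Z}$ by a sphere. I would invoke the standard fact that the Euler characteristic of such a transverse sphere-section of a positive-dimensional complex analytic set vanishes (in the spirit of Sullivan's theorem on links of complex analytic sets). The delicate point, and the reason a naive argument fails, is that stratum by stratum $\partial Z$ is a union of odd-real-dimensional manifolds $\overline{X_i}\cap S_\epsilon (0)$, for each of which Poincar\'e duality only gives $\chi _c=-\chi$ rather than $0$; the vanishing is a global cancellation that is a genuine feature of the complex structure and does emph{not} hold for arbitrary collared real sets (the half-open interval $[0,1)$ is collared yet has $\chi _c=0\neq 1=\chi$). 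The degenerate case in which $X$ is a point is immediate, since then $Z$ is compact and $\partial Z=\emptyset $.

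Finally I would assemble the pieces:
\[
\chi (Z)=\chi (\overline{Z})=\chi _c(\overline{Z})=\chi _c(Z)+\chi _c(\partial Z)=\chi _c(Z),
\]
which is the assertion. As an alternative to quoting the link vanishing directly, one may stratify $Z=\bigsqcup _i X_i$ into its connected complex manifold strata and use Poincar\'e duality on each even-dimensional $X_i$ to get $\chi _c(X_i)=\chi (X_i)$, whence $\chi _c(Z)=\sum _i \chi (X_i)$; but matching this against $\chi (\overline{Z})$ again comes down to the same boundary cancellation, so the vanishing of the Euler characteristic of the complex analytic link really is the unavoidable heart of the proof.
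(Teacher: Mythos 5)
Your proposal follows exactly the same chain as the paper's proof: use the collaring to pass to the compact closure $\overline{Z}$ where $\chi=\chi_c$, apply additivity of $\chi_c$ to split off $\partial Z$, and reduce everything to the vanishing $\chi(\partial Z)=0$, which the paper likewise obtains from Sullivan's theorem (the sphere slices the complex strata in odd-dimensional strata, so the compact stratified space $\partial Z$ has zero Euler characteristic). Your added caveat that the vanishing is a global cancellation, not a stratum-by-stratum Poincar\'e duality argument, is accurate but does not change the route --- the argument is essentially identical to the one in the paper.
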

\begin{proof}
We have
\begin{eqnarray*}
\chi (Z) &=& \chi \left( \overline{Z} \right) {\text{ as $Z$ is collared,}} \\
&=& \chi _c   \left( \overline{Z} \right) {\text{ as $\overline{Z}$ is compact,}} \\
&=&\chi _c   \left( Z \right)  + \chi _c   \left( \partial Z \right)   {\text{ as $\chi _c$ is additive,}} \\
&=&\chi _c   \left( Z \right)  + \chi   \left( \partial Z \right)   {\text{ as $\partial Z$ is compact,}} \\
&=&\chi _c   \left( Z \right)  . 
\end{eqnarray*}
The last equality follows from Sullivan's well-known result in \cite{sullivan} that the Euler characteristic of a stratified space with only odd-dimensional strata is zero.
\end{proof}

\begin{lemma}
\label{lemma2}
Let $h$ be a quasi-Milnor fibration. For all $X\in \SS $ we have
\[
\chi \left( \cl{X}, \cl{X}\backslash X \right)
=\chi \left( \overline{X} , \cup_{Y<X} \overline{Y} \right) 
\]
where as usual $\cl{X}$ denotes closure in the open ball $B_\epsilon (0)$.
\end{lemma}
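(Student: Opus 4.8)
The plan is to reduce both relative Euler characteristics to differences of absolute ones and then feed everything to the collaring hypothesis. Write $F := \cl{X}\setminus X$ for the frontier of $X$ inside the open ball. By the frontier condition for the Whitney stratification $\SS$ of $B_\epsilon(0)\cap h^{-1}(0)$ (the restriction of $\SS'$ to the open ball is again Whitney, and the frontier condition survives restriction to an open set) we have $F=\bigcup_{Y<X}Y$; since taking closure in $U$ commutes with finite unions, $\overline{F}=\bigcup_{Y<X}\overline{Y}$. Hence the asserted identity is precisely
\[
\chi\left(\cl{X},F\right)=\chi\left(\overline{X},\overline{F}\right),
\]
where the left-hand pair lives in the open ball and the right-hand one is its closure in $U$.

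Next I would invoke the long exact sequence of a pair. Every space involved is homotopy equivalent to a compact Whitney stratified set (so has finite-dimensional total homology), and the alternating sum of ranks over the long exact sequence of $(A,B)$ yields $\chi(A,B)=\chi(A)-\chi(B)$. Applying this to both pairs, the goal becomes
\[
\chi\left(\cl{X}\right)-\chi\left(F\right)=\chi\left(\overline{X}\right)-\chi\left(\overline{F}\right),
\]
so it suffices to prove $\chi(\cl{X})=\chi(\overline{X})$ and $\chi(F)=\chi(\overline{F})$ separately. Note this also covers the degenerate case where $X$ is a minimal stratum, in which $F=\overline{F}=\emptyset$.

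Both equalities follow from collaring together with homotopy invariance of $\chi$. For the first, $\overline{X}$ is the closure of a stratum of $\SS$, which is collared by the Examples; hence $\cl{X}=\overline{X}\cap B_\epsilon(0)\hookrightarrow\overline{X}$ is a homotopy equivalence and $\chi(\cl{X})=\chi(\overline{X})$. The second is where the real content lies: I need the \emph{union} $\overline{F}=\bigcup_{Y<X}\overline{Y}$ to be collared, not merely each $\overline{Y}$. This I would obtain directly from the transversality of $S_\epsilon(0)$ to all strata of $h^{-1}(0)$ (condition (iii)): by Thom's First Isotopy Lemma the radial field integrates to a stratum-preserving collar of $h^{-1}(0)\cap\overline{B_\epsilon(0)}$ near the sphere, and this collar restricts to any closed union of strata. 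Thus $\overline{F}$ is collared, $F=\overline{F}\cap B_\epsilon(0)\hookrightarrow\overline{F}$ is a homotopy equivalence, and $\chi(F)=\chi(\overline{F})$.

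I expect the main obstacle to be exactly this collaring of the frontier $\overline{F}$: the Examples only record collaring of individual stratum closures, so the argument must appeal to the global stratum-preserving collar supplied by sphere transversality in order to handle the whole union $\bigcup_{Y<X}\overline{Y}$ at once. Once that is established, combining the two homotopy equivalences with the pairwise identity $\chi(A,B)=\chi(A)-\chi(B)$ closes the proof.
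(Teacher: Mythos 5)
Your proof is correct and takes essentially the same route as the paper's: identify the frontier $\cl{X}\backslash X$ with $\cup_{Y<X}Y$ via the frontier condition, note that both $\cl{X}$ and this frontier are collared so their Euler characteristics agree with those of their closures $\overline{X}$ and $\cup_{Y<X}\overline{Y}$ in $U$, and conclude using $\chi(A,B)=\chi(A)-\chi(B)$. The only substantive difference is that you explicitly justify collaredness of the whole union $\cup_{Y<X}\overline{Y}$ (via the stratum-preserving collar coming from transversality of $S_\epsilon(0)$ to all strata), a point the paper asserts without comment, and you correctly treat $\cup_{Y<X}\overline{Y}$ as the closure of the frontier rather than literally the same set.
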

\begin{proof}
First we note that $\cl X \backslash X $ is just $\cup_{Y<X }Y$ which is the same set as $ \cup_{Y<X} \overline{Y}$. Both sets are collared so $\chi \left( \cl{X} \right)
=\chi \left( \overline{X} \right) $ and $\chi \left( \cl{X}\backslash X \right)
=\chi \left(  \cup_{Y<X} \overline{Y} \right) $. 
\end{proof}

We now come to the main theorem in this section. 
\begin{theorem}
\label{maineulerthm}
For a quasi-Milnor fibration the Euler characteristic of the general fibre and special fibre are related by the following:
\[
\chi (\gf ) - \chi (\sf ) = \sum_{X\in \SS }  \widetilde{\chi } (\tmf _X) \chi \left( \cl{X}, \cl{X}\backslash X \right) .
\]

\end{theorem}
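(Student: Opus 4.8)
The plan is to push everything through to Euler characteristics with compact support and to exploit two features of $\chi _c$: its additivity over a stratification and the fact that it integrates over the fibres of a subanalytic map. First I would rewrite both sides in terms of $\chi _c$. For the right-hand side, combining Lemma~\ref{collared} with additivity of $\chi _c$ and the observation from Lemma~\ref{lemma2} that $\cl{X}\backslash X=\bigcup_{Y<X}\overline{Y}$ is collared, I get $\chi\left(\cl{X},\cl{X}\backslash X\right)=\chi _c(\cl{X})-\chi _c(\cl{X}\backslash X)=\chi _c(X)$, so the sum on the right becomes $\sum_{X\in\SS}\widetilde{\chi}(\tmf _X)\,\chi _c(X)$. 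For the left-hand side, transversality of $S_\epsilon(0)$ to the (complex, hence even-dimensional) strata forces the boundaries $\partial\sf$ and $\partial\gf$ to consist only of odd-dimensional pieces, so Sullivan's theorem gives $\chi(\partial\sf)=\chi(\partial\gf)=0$ and, by the same computation as in Lemma~\ref{collared}, $\chi(\sf)=\chi _c(\sf)$ and $\chi(\gf)=\chi _c(\gf)$. Since $\sf$ is the disjoint union of its finitely many strata, additivity gives $\chi _c(\sf)=\sum_{X\in\SS}\chi _c(X)$.

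With these reductions, and using $1+\widetilde{\chi}(\tmf _X)=\chi(\tmf _X)$, the theorem is equivalent to the single identity
\[
\chi _c(\gf)=\sum_{X\in\SS}\chi(\tmf _X)\,\chi _c(X).
\]
This identity is the geometric heart of the argument, and here I would invoke the stratification theory. For $t$ small the general fibre lies inside a tubular neighbourhood $N$ of $\sf$ in $B_\epsilon(0)$, and Thom--Mather control data supply a stratified retraction $\pi:N\to\sf$ whose fibre over a point $x\in X$ is a normal slice, i.e.\ a transverse manifold $T$ with $T\cap X=\{x\}$. Restricting to the general fibre, $\pi^{-1}(x)\cap\gf$ is then exactly the transverse Milnor fibre $\tmf _{X,x}$, whose homotopy type, and hence whose $\chi _c$, is constant along $X$ by the result of L\^{e}. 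Applying the Fubini-type property of the compactly-supported Euler characteristic to the subanalytic map $\pi|_\gf:\gf\to\sf$ then yields $\chi _c(\gf)=\sum_{X}\chi _c(X)\,\chi _c(\tmf _X)$. Finally $\tmf _X$ is a smooth complex, hence even real-dimensional, manifold whose boundary link is odd-dimensional, so $\chi _c(\partial\tmf _X)=0$; additivity gives $\chi _c(\tmf _X)=\chi _c(\overline{\tmf _X})=\chi(\overline{\tmf _X})=\chi(\tmf _X)$, the last equality being the collaring remark following the definition of $\tmf _X$. Substituting recovers the displayed identity and the theorem.

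I expect the main obstacle to be making the third paragraph fully rigorous. One must verify that the retraction $\pi$ can be chosen so that its fibres genuinely recover the transverse Milnor fibres $\tmf _X$ rather than some cruder truncation, that $\pi|_\gf$ is subanalytic so that the Fubini property for $\chi _c$ legitimately applies, and that the local triviality along each stratum — which is what lets one treat $\chi _c(\tmf _X)$ as constant on all of $X$ and not merely over a single transverse ball — is available globally along the stratum. This is precisely where the vanishing-cycle input and the Whitney/Thom--Mather local structure must be invoked with care; everything else is bookkeeping with additivity of $\chi _c$ and Sullivan's theorem.
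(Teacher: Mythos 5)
Your reduction steps are correct, and in fact they coincide with what the paper does: using Lemma~\ref{collared}, additivity of $\chi _c$, and Lemma~\ref{lemma2}, both you and the paper convert the statement into the single identity $\chi _c (\gf ) = \sum_{X\in \SS } \chi (\tmf _X)\, \chi _c (X)$. Where you diverge is in how this identity is established. The paper proves it sheaf-theoretically: it applies compact-support versions of Proposition~6.2.1 and Lemma~6.2.7 of \cite{dimca} to the vanishing-cycles complex $\varphi _h \C ^\bullet _{B_\epsilon (0)}$, whose stalk at a point $x$ of a stratum $X$ computes the reduced cohomology of the local Milnor fibre of $h$ at $x$, identified with $\tmf _X$ via L\^e's result. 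You instead propose a geometric Fubini argument over a Thom--Mather retraction $\pi : N \to \sf $.

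The gap is in your third paragraph, and it is a genuine one, not bookkeeping: for a \emph{fixed} regular value $t$, the fibre $\pi ^{-1}(x)\cap \gf $ is \emph{not} the transverse Milnor fibre $\tmf _{X,x}$ for all $x\in X$. The normal slices $\pi ^{-1}(x)$ shrink to zero size as $x$ approaches the frontier $\cl{X}\backslash X$, so the requirement that $|t|$ be small relative to the Milnor radius of $h|\pi ^{-1}(x)$ fails on a neighbourhood of the frontier; there the fibres are empty or truncated. Already for $h(x,y)=xy$ on $\C ^2$ one sees this: over points of an axis stratum at distance roughly $\sqrt{|t|}$ or less from the origin, the slice $\pi ^{-1}(x)$ misses $h^{-1}(t)$ entirely, so the fibre is empty rather than a point. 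Consequently the function $x\mapsto \chi _c \left( \pi ^{-1}(x)\cap \gf \right)$ is not constant on $X$, and the Fubini property of $\chi _c$ does not by itself yield $\chi _c (\gf )=\sum_X \chi _c(X)\chi _c(\tmf _X)$; one must prove that the contributions of these discrepancy regions cancel against the excess captured by the fibres over the lower strata (in the example, $\pi ^{-1}(0)\cap \gf$ is the whole Milnor fibre of the node minus two annuli, not just ``the'' transverse fibre at $0$). Establishing that cancellation is the real content of the theorem; it is exactly what the constructible-sheaf formalism provides, because the stalks of $\varphi _h \C ^\bullet$ encode the limit $t\to 0$ pointwise, stratum by stratum, rather than at one fixed $t$. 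Your closing paragraph honestly identifies this as the obstacle, but as written the step would fail; to complete the proof along your lines you would need an induction on the depth of the stratification (removing neighbourhoods of the deepest strata first and shrinking $t$ accordingly), or else to fall back on the vanishing-cycles argument the paper uses.
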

\begin{proof}
We can use the theory of vanishing cycles. A good reference for this is \cite{dimca}. See also Appendix B of \cite{num-control}.

Let $\varphi _h \C ^\bullet _{B_\epsilon (0)}$ be the sheaf of vanishing cycles of $h$ on $B_\epsilon (0)$. This is well-known to be constructible (see Appendix B of \cite{num-control}).

We can now use a compact support version of Proposition~6.2.1 of \cite{dimca}. The result there is stated for a proper map, which our $h|B_\epsilon (0)$ certainly is not, but the conclusion still holds if we drop the proper assumption and use Euler characteristic with compact support. That is, we have
\[
\chi _c (\gf ) - \chi _c(\sf ) = \chi _c \left( \sf , \varphi _h \C ^\bullet _{B_\epsilon (0)} \right)  .
\]
Now using the compact support version of Lemma~6.2.7 of \cite{dimca} and the paragraph preceding it we have
\[
\chi _c \left( \sf , \varphi _h \C ^\bullet _{B_\epsilon (0)} \right) 
= \sum_{X\in \SS } \chi _c(X) \widetilde{\chi }  \left( \mf _{h,x_X} \right) .
\]
where $\mf _{h,x_X}$ is the Milnor fibre of $h$ at the point $x_X\in X$. Since $\sf $ is a product along $X$ and using the result of L\^e in \cite{le}, we have that $\mf _{h,x_X}$ is homotopically equivalent to $\tmf _X $ 
so $\widetilde{\chi } \left( \mf _{h,x_X} \right) = \widetilde{\chi } \left( \tmf _X \right) $.

Since $\gf $ and $\sf $ are collared we have by Lemma~\ref{collared} that 
\[
\chi _c (\gf ) - \chi _c (\sf) = 
\chi (\gf ) - \chi (\sf) .
\]
This just leaves to show that $\chi _c (X) = \chi \left( \cl{X}, \cl{X}\backslash X \right) $.
We have
\begin{eqnarray*}
\chi _c \left( \cl{X}\right) &=&\chi _c \left( X \cup \left( \cl{X} \backslash X  \right) \right) \\
&=&\chi _c \left( X \right) +  \chi _c \left( \cl{X} \backslash X  \right) , {\text{ by additivity of }}\chi _c, \\
\chi  \left( \cl{X}\right) &=&\chi _c \left( X \right) +  \chi \left( \cl{X} \backslash X \right) , {\text{ by Lemma~\ref{collared},}} \\
\chi  \left( \cl{X}\right) - \chi \left( \cl{X} \backslash X \right) &=&\chi _c \left( X \right)   \\
\chi  \left( \cl{X}, \cl{X} \backslash X \right) &=& \chi _c \left( X \right).
\end{eqnarray*}
\end{proof}

\begin{remark}
For a reduced defining equation $h$, if we consider the top-dimensional strata in $\SS $, then $\tmf _X$ is homotopically equivalent to a point. Hence, $\widetilde{\chi }\left(\tmf _X\right)=0$ and we can ignore the top-dimensional strata in the above formula. 
\end{remark}

\section{Deducing the Massey--Siersma result and composing singularities}
\label{sec:gen-zariski}

Theorem~\ref{maineulerthm} can be seen as a generalization of one of the main theorems in \cite{ms}.
After showing this, we shall apply the theorem to investigate composed singularities as they do in \cite{ms}.

For a function $f$ let $\Sigma f$ denote its critical set, i.e., the points where it is not a submersion. We denote by $V(f)$ the zero-set of $f$.

Let $(z_0,z_1, \dots , z_n)$ be coordinates for $\C ^{n+1}$. Suppose that $f_0:\C ^{n+1}\to \C$ is a reduced complex analytic map such that $f_0(0)=0$, $\Sigma f_0$ is $1$-dimensional and $\Sigma \left( f_0|V(z_0)\right) $ is $0$-dimensional.

Now assume that $f_s:\C ^{n+1}\to \C$ is a family of analytic maps that deforms $f_0$.
Let $f(z_0,z_1, \dots , z_n,s)=f_s(z_0,z_1, \dots , z_n)$. 
\begin{definition}
The family $f_s$ is an {\em{equi-transversal deformation}} of $f_0$ if for all components $\nu $ of $\Sigma f_0$ through the origin in $\C ^{n+1}$ there exists a unique component $\nu '$ of $\Sigma f$ containing $\nu $ and moreover $\stackrel{\circ}{\mu}_\nu = \stackrel{\circ}{\mu}_{\nu '}$. Here $\stackrel{\circ}{\mu}_\nu$ denotes the Milnor number of the transverse Milnor fibre along the component $\nu $. A similar definition is made for $\nu '$. 
\end{definition}

If $f_s$ is an equi-transversal deformation of $f_0$, then there exists $\epsilon >0$ and $a>0$ so that, defining $h(x)=f_a(x)$ we produce a quasi-Milnor fibration with the general fibre of $h$ homeomorphic to the Milnor fibre of $f_0$ and the special fibre equal to $f_a^{-1}(0)\cap B_\epsilon $. This is effectively proved in Lemmas 2.1 and 2.2 of \cite{ms}, but see also \cite{siersma-wark}.

Massey and Siersma state and prove the following theorem. Readers unfamiliar with the concept of a point with generic polar curve should consult \cite{ms} or just consider that there is a zero-dimensional stratum at that point.
\begin{theorem}[\cite{ms} Theorem 2.3]
\[
\tilde{b}_n(F) -\tilde{b}_{n-1} (F) 
= \sum_p \tilde{b}_n(F_p )+ \sum_q \left( \tilde{b}_n(F_q) - \tilde{b}_{n-1}(F_q) \right)
- \sum_k \stackrel{\circ}{\mu }_k \left( \chi (\Sigma_a^k) - \sum_{q\in \Sigma_a^k} 1 \right),
\]
where $F$ denotes the Milnor fibre $f_0$ at the origin, $F_x$ denotes the Milnor fibre of $f_a-f_a(x)$ at the point $x$, the
$p$'s are summed over all $p\in B_\epsilon \cap \Sigma f_a$ which are not contained in $V(f_a)$, the $q$'s are summed over those
$q$ in $B_\epsilon \cap \Sigma f_a $ which are contained in $V(f_a)$ and at which $f_a$ has generic polar curve, the $\{ \Sigma_a^k
: k=1,2,\dots \} $ are the irreducible components of $\Sigma V(f_a)$ in $B_\epsilon $ and $\stackrel{\circ}{\mu}_k$ is the generic transverse Milnor number of $\Sigma _a^k$, the $k$th component of $\Sigma (f_a)$.
\end{theorem}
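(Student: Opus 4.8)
The plan is to apply Theorem~\ref{maineulerthm} to the quasi-Milnor fibration $h=f_a$ set up above, for which $\gf$ is homeomorphic to the Milnor fibre $F$ of $f_0$ and $\sf=f_a^{-1}(0)\cap B_\epsilon$. Because $f_0$ has a one-dimensional critical locus, the Kato--Matsumoto theorem of \cite{km} makes $F$ a wedge of spheres of dimensions $n-1$ and $n$, and the same holds for each $F_q$ (its singularity sitting on the one-dimensional set $\Sigma f_a$), whereas each $F_p$ is a wedge of $n$-spheres alone. Hence $\widetilde{\chi}(F)=(-1)^n(\tilde b_n(F)-\tilde b_{n-1}(F))$, $\widetilde{\chi}(F_q)=(-1)^n(\tilde b_n(F_q)-\tilde b_{n-1}(F_q))$ and $\widetilde{\chi}(F_p)=(-1)^n\tilde b_n(F_p)$, so after multiplying by $(-1)^n$ it is enough to prove the reduced-Euler-characteristic identity
\[
\widetilde{\chi}(F)=\sum_p\widetilde{\chi}(F_p)+\sum_q\widetilde{\chi}(F_q)+(-1)^{n-1}\sum_k\stackrel{\circ}{\mu}_k\left(\chi(\Sigma_a^k)-\sum_{q\in\Sigma_a^k}1\right).
\]

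First I would evaluate the right-hand side of Theorem~\ref{maineulerthm} for $\sf$. By the remark following that theorem the top-dimensional strata contribute nothing. A one-dimensional stratum $X$ on the generic part of $\Sigma_a^k$ has an $n$-dimensional transverse slice, so $\tmf_X$ is a wedge of $\stackrel{\circ}{\mu}_k$ spheres of dimension $n-1$ and $\widetilde{\chi}(\tmf_X)=(-1)^{n-1}\stackrel{\circ}{\mu}_k$. Using the equality $\chi(\cl{X},\cl{X}\backslash X)=\chi_c(X)$ established in the proof of Theorem~\ref{maineulerthm}, additivity of $\chi_c$ along $\Sigma_a^k$ (whose zero-dimensional strata are precisely the special points $q$ lying on it), and the collaring argument of Lemma~\ref{collared} (the real link of $\Sigma_a^k$ is odd-dimensional, so $\chi_c(\Sigma_a^k)=\chi(\Sigma_a^k)$), the one-dimensional strata of $\Sigma_a^k$ together contribute $(-1)^{n-1}\stackrel{\circ}{\mu}_k(\chi(\Sigma_a^k)-\sum_{q\in\Sigma_a^k}1)$. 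Each zero-dimensional stratum $X=\{q\}$ has $\tmf_X=F_q$ and $\chi(\cl{X},\cl{X}\backslash X)=1$, contributing $\widetilde{\chi}(F_q)$. Theorem~\ref{maineulerthm} therefore gives
\[
\chi(F)-\chi(\sf)=\sum_q\widetilde{\chi}(F_q)+(-1)^{n-1}\sum_k\stackrel{\circ}{\mu}_k\left(\chi(\Sigma_a^k)-\sum_{q\in\Sigma_a^k}1\right).
\]

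The points $p$ lie off $V(f_a)$ and so are not strata of $\sf$; they must therefore enter through $\chi(\sf)$ itself, and pinning this down is the crux of the argument. I would compute $\chi(\sf)$ from the proper family $f_a:\overline{B_\epsilon}\cap f_a^{-1}(\overline{D_\eta})\to\overline{D_\eta}$. For $a$ small this Milnor tube is homeomorphic to the one for $f_0$ and hence contractible, so its Euler characteristic is $1$. Splitting $\overline{D_\eta}$ into its regular values and its finitely many critical values and using multiplicativity of $\chi_c$ over the locally trivial fibration above the (connected) set of regular values, the generic fibre is well defined up to homeomorphism and equals $F$, while above each nonzero critical value the fibre differs from $F$ only by the local vanishing cycles at the corresponding $p$, contributing $\widetilde{\chi}(F_p)$. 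Assembling these contributions by additivity of $\chi_c$ collapses to $1=\chi(\sf)-\sum_p\widetilde{\chi}(F_p)$, that is $\widetilde{\chi}(\sf)=\sum_p\widetilde{\chi}(F_p)$. Substituting this, together with $\widetilde{\chi}(F)=\chi(F)-1$, into the previous display yields the reduced-characteristic identity of the first paragraph, and the Betti-number translation then finishes the proof. The genuinely delicate point is the contractibility of the deformed tube (equivalently $\chi=1$), which rests on the equi-transversal hypothesis controlling the family near the sphere $S_\epsilon$.
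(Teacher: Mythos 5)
Your proposal is correct, and its skeleton is the same as the paper's: apply Theorem~\ref{maineulerthm} to the quasi-Milnor fibration $h=f_a$, stratify $\sf$ by the points $q$ with generic polar curve, the components of each $\Sigma_a^k$ with those points removed, and the smooth top stratum (which contributes nothing since $f_a$ is reduced), then evaluate $\widetilde{\chi}\left(\tmf_X\right)$ and $\chi\left(\cl{X},\cl{X}\backslash X\right)$ exactly as the paper does, arriving at $\chi(\gf)-\chi(\sf)=\sum_q\widetilde{\chi}(F_q)+(-1)^{n-1}\sum_k\stackrel{\circ}{\mu}_k\left(\chi(\Sigma_a^k)-\sum_{q\in\Sigma_a^k}1\right)$. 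Where you genuinely differ is in how the $\sum_p$ terms arise. The paper simply quotes Theorem~2.3 of \cite{siersma-wark}: $\sf$ is connected and homotopy equivalent to a wedge of $n$-spheres, one for each vanishing cycle of the critical points $p$ lying off $V(f_a)$, so $\widetilde{\chi}(\sf)=\sum_p\widetilde{\chi}(F_p)$ immediately. You instead rederive this identity at the Euler-characteristic level, from contractibility of the deformed Milnor tube together with additivity and multiplicativity of $\chi_c$ over $\overline{D_\eta}$; this is sound, and more self-contained in that it uses only $\chi$-level data rather than the homotopy type of $\sf$, but note that the tube contractibility you flag as the crux is precisely the input from which Siersma proves his theorem (and in this setting it is supplied by Lemmas~2.1 and~2.2 of \cite{ms}), so you are reproving the Euler-characteristic shadow of \cite{siersma-wark} rather than bypassing that circle of ideas. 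Two minor points that do not affect the argument: the Kato--Matsumoto theorem \cite{km} gives $(n-2)$-connectivity of $F$ and of the $F_q$, not a wedge-of-spheres decomposition --- what you need, and what does follow, is only that reduced homology is concentrated in degrees $n-1$ and $n$, so that $\widetilde{\chi}=(-1)^n(\tilde{b}_n-\tilde{b}_{n-1})$; and your fibrewise decomposition over $\overline{D_\eta}$ tacitly assumes all critical values come from interior critical points, i.e.\ that $S_\epsilon$ stays transverse to every fibre $f_a^{-1}(t)$, which is again part of what the Massey--Siersma equi-transversality lemmas provide.
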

\begin{proof}
By Lemmas~2.1 and 2.2 of \cite{ms} we have a quasi-Milnor fibration. 
The general fibre $\gf $ is connected and has non-trivial Betti numbers $b_n$ and $b_{n-1}$ by the Kato-Matsumoto Theorem in \cite{km}. By Theorem~2.3 of \cite{siersma-wark},  $\sf$ is connected and is homotopically equivalent to a wedge of spheres. (Note that in his statement Siersma has an erroneous extra $\partial $.) Hence, we have
\[
\chi (\gf ) - \chi (\sf ) = 
\widetilde{\chi } (\gf) - \widetilde{\chi }(\sf ) = 
(-1)^n\left( \tilde{b}_n(F) - \tilde{b}_{n-1}(F) - \mu (F_p) \right) .
\] 

We can use the statement of our main theorem, Theorem~\ref{maineulerthm}:
\[
\chi (\gf ) - \chi (\sf ) = \sum_{X\in \SS }  \widetilde{\chi } \left( \tmf _X \right) \chi \left( \cl{X}, \cl{X}\backslash X \right) .
\]
We can re-express the left-hand side of this equation using the previous equation. To re-express the right-hand side we need to stratify the special fibre.
We can stratify by taking as $0$-dimensional strata all the points that have generic polar curve. 
The $1$-dimensional strata are the (connected) components of $\Sigma ^k_a\backslash \{ q_1, \dots , q_s\}$.
The $n$-dimensional stratum comes from the complement of the union of $\Sigma ^k_a$ in $\sf $, since the map is reduced, we see that the contribution from the $n$-dimensional stratum to the right-hand side is zero.

We then have
\begin{eqnarray*}
& & \sum_{X\in \SS }  \widetilde{\chi } \left( \tmf _X \right) \chi \left( \cl{X}, \cl{X}\backslash X \right) \\
&=& \sum_q \widetilde{\chi } \left( \tmf _q \right) \chi (\{ q \} , \emptyset) 
 + \sum_k \widetilde{\chi } \left( \tmf _{\Sigma^k_a \backslash \{ q_i \}} \right)\chi \left(\Sigma ^k_a , \cup_{q\in \Sigma ^k_a} \{ q \} \right)   \\
&=& \sum_q \widetilde{\chi } (F_q) 
 + \sum_k  \widetilde{\chi} \left( \tmf _{\Sigma^k_a \backslash \{ q_i \}}  \right)\left( \chi \left(\Sigma ^k_a \right) - \chi \left( \cup_{q\in \Sigma ^k_a} \{ q \} \right)  \right)   \\
&=& \sum_q \widetilde{\chi } (F_q) 
+ \sum_k    (-1)^{n+1-2} \stackrel{\circ}{\mu} _k  \left( \chi \left(\Sigma ^k_a \right) - \sum_{q\in \Sigma ^k_a} 1  \right)   \\
&=& \sum_q (-1)^n \left( \widetilde{b}(F_q) - \widetilde{b}_{n-1} (F_q) \right) 
 + \sum_k    (-1)^n (-1) \stackrel{\circ}{\mu} _k  \left( \chi \left(\Sigma ^k_a \right) - \sum_{q\in \Sigma ^k_a} 1  \right) .
\end{eqnarray*}
From this and the expression for the left-hand side we get the theorem.
\end{proof}

We know turn our attention to composed singularities. These occur when we take the composition of two known singularities to produce a third.
In particular, suppose that $f:(\C ^n,0)\to (\C ^2,0)$, $n\geq 2$, is complex analytic map germ defining an isolated complete intersection singularity and that $g:(\C ^2,0)\to (\C ,0)$ defines an isolated curve singularity.
The composition $F:(\C ^n,0)\to (\C ,0)$ given by $F=g\circ f$ is a non-isolated hypersurface singularity since the singular locus is $f^{-1}(0)$.

These are known as {\em{generalized Zariski examples}} and have been studied in many papers. For us particular references are \cite{nemethi} p61, \cite{damon-highmult} p92 and \cite{ms} p770.

Denoting the components of $f$ by $(f_1,f_2)$, the deformation $F_a(x)=g(f_1(x)-a_1,f_2(x)-a_2)$ is equi-transversal for generic $a=(a_1,a_2)$. As in the previous theorem, we can set $h(x)=F_a(x)$ and restrict to a small ball for such a generic $a$. The critical locus of $F_a$ is $f^{-1}(a)$ plus a collection of isolated singularities. The special fibre of $h$ is a wedge of spheres, the number of which is equal to the sum of the Milnor numbers of the isolated singularities, see Theorem~2.3 of \cite{siersma-wark}, denote this number by $\mu (\sf )$. The singular set of $F_a$ is $f^{-1}(a)$ and the transversal Milnor fibre of this manifold is the Milnor fibre of $g$. Hence the stratification of the special fibre is $\SS = \left\{  F_a^{-1} (0) \backslash f^{-1}(a) ,f^{-1}(a)  \right\}$. The generic fibre of $h$ is the Milnor fibre of $F$, denoted $\mf _F$.

\begin{theorem}[See \cite{ms}]
\label{msthm}
In this situation we have 
\[
\chi  (\mf _F) = \chi \left( F_a^{-1}(0) \right)  - \mu (g) \chi \left( \mf _f \right) .
\]
\end{theorem}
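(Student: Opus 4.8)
The plan is to apply the main result, Theorem~\ref{maineulerthm}, to the quasi-Milnor fibration $h=F_a$ constructed in the paragraph above. For it we have general fibre $\gf = \mf_F$, special fibre $\sf = F_a^{-1}(0)\cap B_\epsilon$ with $\chi(\sf)=\chi(F_a^{-1}(0))$, and the stratification $\SS = \left\{ F_a^{-1}(0)\backslash f^{-1}(a),\, f^{-1}(a) \right\}$. It therefore suffices to show that the right-hand side of the formula in Theorem~\ref{maineulerthm} equals $-\mu(g)\chi(\mf_f)$; rearranging then yields the stated identity.

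First I would dispose of the open stratum $X_1 = F_a^{-1}(0)\backslash f^{-1}(a)$. This is the top-dimensional stratum of the reduced hypersurface $F_a^{-1}(0)$, so by the Remark following Theorem~\ref{maineulerthm} its transverse Milnor fibre is contractible and $\widetilde{\chi}\left(\tmf_{X_1}\right)=0$; it contributes nothing to the sum. The whole content thus sits in the stratum $X_2 = f^{-1}(a)$, where two inputs must be assembled. For the transverse type: since $a$ is generic it avoids the discriminant of $f$, so $f$ is a submersion along $f^{-1}(a)$ and a transverse slice $T$ to $X_2$ is carried biholomorphically by $f$ onto a neighbourhood of $a$ in $\C^2$; under this identification $h|_T = g(f(\cdot)-a)$ becomes the germ $g$, so $\tmf_{X_2}$ is the Milnor fibre $\mf_g$ of the plane curve singularity. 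By Milnor's theorem $\mf_g$ is homotopy equivalent to a wedge of $\mu(g)$ circles, whence $\widetilde{\chi}\left(\tmf_{X_2}\right)=\widetilde{\chi}(\mf_g)=-\mu(g)$. For the relative term: $X_2$ is the deepest stratum, so there is no $Y<X_2$, and $f^{-1}(a)\cap B_\epsilon$ is relatively closed in $B_\epsilon$; hence by Lemma~\ref{lemma2} we get $\chi\left(\cl{X_2},\cl{X_2}\backslash X_2\right)=\chi\left(\cl{X_2}\right)=\chi\left(f^{-1}(a)\cap B_\epsilon\right)=\chi(\mf_f)$.

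Substituting these two computations, the right-hand side collapses to the single term $\widetilde{\chi}(\mf_g)\,\chi(\mf_f)=-\mu(g)\chi(\mf_f)$, and combining with $\chi(\gf)-\chi(\sf)$ on the left gives $\chi(\mf_F)=\chi\left(F_a^{-1}(0)\right)-\mu(g)\chi(\mf_f)$, as required.

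The step I expect to demand the most care is justifying that $\SS$ really consists of exactly these two strata, equivalently that the isolated singularities of $F_a$ produced by the deformation lie at \emph{nonzero} critical values and so do not meet the special fibre. This is what keeps $f^{-1}(a)$ a closed stratum with empty frontier, so that the relative term reduces cleanly to $\chi(\mf_f)$ rather than acquiring contributions from extra zero-dimensional strata; it is precisely the geometric input supplied by the equi-transversality of the deformation together with Theorem~2.3 of \cite{siersma-wark} (which also furnishes the wedge-of-spheres description of $\sf$). Once this structural fact is granted, everything else is a routine Euler characteristic bookkeeping with the main theorem.
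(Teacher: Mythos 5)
Your proposal is correct and follows essentially the same route as the paper: apply Theorem~\ref{maineulerthm} to $h=F_a$, discard the top stratum since its transverse Milnor fibre is contractible, and evaluate the single remaining term as $\widetilde{\chi}\left(\mf _g\right)\chi\left(f^{-1}(a)\right)=-\mu (g)\chi \left( \mf _f \right)$. The only difference is one of emphasis: you spell out the justification (via equi-transversality and Siersma's theorem) that the isolated critical points of $F_a$ miss the special fibre, which the paper absorbs into the setup paragraph preceding the theorem.
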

Nem\'ethi gives a similar formula in \cite{nemethi}.
\begin{proof}
From Theorem~\ref{maineulerthm} we have 
\begin{eqnarray*}
\chi (\gf ) - \chi (\sf ) &=& \sum_{X\in \SS } \left( \chi (\tmf _X)-1 \right) \chi \left( \cl{X}, \cl{X}\backslash X \right) \\
\chi (\mf _F) &=& \chi (\sf ) +(1-\mu (g) -1 ) \chi \left( f^{-1}(a) , \emptyset \right) \\
 &=& \chi \left( F_a^{-1}(0) \right) - \mu (g) \chi \left( f^{-1}(a) \right) \\
 &=&  \chi \left( F_a^{-1}(0) \right) -  \mu (g) \chi \left( \mf _f \right) . \\
\end{eqnarray*}
\end{proof}
In \cite{damon-highmult} Corollary 10.4 Damon shows that
\[
\widetilde{\chi } \left( \mf _F \right) = \mu _I (f) + (-1)^n   \mu (g) \chi \left( \mf _f \right) 
\]
where $\mu _I (f) $ is the image Milnor number for $f$, defined in the next section. 

With this result and Theorem~\ref{msthm} we find that
\[
 \chi \left( F_a^{-1}(0) \right) = \mu _I(f) +\left( (-1)^n +1  \right) \mu (g) \chi \left( \mf _f \right) +1 
 \] 
 and in particular if $n$ is odd, then 
 \[
\widetilde{\chi} \left( F_a^{-1}(0) \right) = \mu _I(f) .
 \] 

\section{Disentanglements and multiple point spaces}
\label{sec:dis}
Much of this section can be developed for multi-germs but for clarity we shall only develop the theory for mono-germs.

Let $f:(\C ^n,0)\to (\C ^p,0)$ be a map-germ with an isolated instability at the origin. That is, for a representative of $f$, which we also denote by $f$, the multi-germ $f:(\C ^n ,\Sigma f \cap f^{-1}(y))\to (\C ^p,y)$ is stable for all $y\neq 0$. Here $\Sigma f$ is the critical set of $f$, i.e., the points where the differential of $f$ has less than maximal rank.

\begin{definition}
There exists a topological stabilization of $f$, that is a map $\widetilde{f}:U\to \C^ p$ where $U$ is an open neighbourhood of the origin in $\C ^n$ and $\widetilde{f}$ is topologically stable at every point.
The discriminant of $\widetilde{f}$ is called the {\em{disentanglement of $f$}} and is denoted $\dis (f)$.
\end{definition}
If $f$ is corank $1$ or in the nice dimensions, then the stabilization is smooth. 
The precise details for the construction of the disentanglement can be found in \cite{marar}.

Consider the case of $p\leq n+1$. Here the disentanglement is a hypersurface and for $n>1$ it has non-isolated singularities.
It is shown in \cite{dm} and \cite{vancyc} that it is homotopically equivalent to a wedge of spheres of dimension $p-1$. Note that in the case $p=n+1$, the discriminant is just the image of $\widetilde{f}$.

An obvious task is to compare the topology of the disentanglement of $f$ and the Milnor fibre of the hypersurface given by the discriminant of $f$. Theorem~\ref{maineulerthm} allows us to do this when we view the disentanglement as a special fibre and the Milnor fibre of the discriminant as the general fibre. That we can find $h$ from Section~\ref{sec:notation} in the case $p=n+1$ is effectively described in \cite{vancyc}, but see also \cite{siersma-wark}. The statement for the $n\geq p$ case is similar.

Theorem~\ref{maineulerthm} requires that we know the Euler characteristic of the transverse Milnor fibre. In our applications a stratum in the special fibre will arise from the discriminant of a stable map. When the stable map has more than one branch, then this Euler characteristic is zero as we see in the next lemma.

First we shall say what we mean by stratification by stable type. 
Let $G:(\C ^n,\{ x_1, \dots ,x_s\}  )\to (\C ^p,0)$ be a stable multi-germ. 
There exist open sets $U\subseteq \C ^n $ and $W \subseteq \C ^p$ such that 
$G^{-1}(W) = U $ and $G:U\to W$ is a representative of $G$.
We can partition the discriminant of $G$ by stable type. That is, $y_1$ and $y_2$ in $\C ^p$
have the same stable type if $G_1:(\C ^n , G_1^{-1}(y_1)\cap \Sigma G_1 ) \to (\C ^p,y_1)$ and
$G_2:(\C ^n , G_2^{-1}(y_2)\cap \Sigma G_2 ) \to (\C ^p,y_2)$ are $\AA $-equivalent. 
These sets are complex analytic manifolds. 

If $G$ is corank $1$ or in the nice dimensions, then this partition is the canonical Whitney stratification. See Section~6 of \cite{polar} or Section~2.5 of \cite{dupwall}.
\begin{lemma}
\label{sakamoto}
Let $G:(\C ^n ,\{ x_1, \dots ,x_s\} )\to (\C ^p,0)$ be a stable multi-germ with $n\geq p-1$. If we can Whitney stratify the discriminant by stable type, then for any stratum $X$, we have $\chi (\tmf _X) =0$.
\end{lemma}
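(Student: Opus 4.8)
The plan is to reduce the computation of $\tmf _X$ to the Milnor fibre of a reduced local equation of $\dis (G)$ restricted to a transverse slice, and then to exploit the fact that, having $s\geq 2$ branches, this equation factorises non-trivially. First I would fix a point $x\in X$ and a slice $T$ transverse to $X$ with $T\cap X=\{x\}$, so that $\tmf _X$ is the Milnor fibre of $h|_T$ at $x$, where $h$ is a reduced local equation for $\dis (G)$. Because the stable type is constant along $X$, the pair $(T,\dis (G)\cap T)$ realises the transverse stable type, and its branches are the discriminants $\Delta _1,\dots ,\Delta _s$ of the $s$ branch-germs of $G$. Since $G$ is stable these are distinct reduced hypersurfaces, so $h=h_1\cdots h_s$ with $h_i$ a reduced equation of $\Delta _i$; as $s\geq 2$ I would group the factors as $h=h_1\cdot k$ with $k=\prod _{i\geq 2}h_i$, both non-units.

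The key step is to realise this Milnor fibre as a fibration over a punctured disc. Consider $\Phi =(h_1,k):B_\epsilon \to \C^2$. Then $\tmf _X=\{h=t\}\cap B_\epsilon =\Phi ^{-1}(C_t)$, where $C_t=\{uv=t\}\cong \C^*$. I would show that, after shrinking $\epsilon $, the restriction $\Phi :\Phi ^{-1}(C_t)\to C_t$ is a (stratified) locally trivial fibration. Granting this, multiplicativity of the Euler characteristic for a locally trivial fibration gives $\chi (\tmf _X)=\chi (C_t)\cdot \chi (\text{fibre})=\chi (\C^*)\cdot \chi (\text{fibre})=0$, since $\chi (\C^*)=0$. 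As a check, in the generic normal-crossing model $h=z_1\cdots z_s$ one has $\{h=t\}\simeq (\C^*)^{s-1}$, whose Euler characteristic is indeed zero; this is also the value forced by Sakamoto's join formula applied to the associated direct sum, which is why the argument is naturally attached to that circle of ideas.

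The hard part is establishing the fibration, and this is exactly where the stability of $G$ is used. Writing $\Phi $ in Whitney-stratified form, I must verify that $C_t$ contains no stratified critical value of $\Phi $, so that Thom's First Isotopy Lemma applies (properness coming from the transversality conditions built into the quasi-Milnor fibration). Stability forces $\Delta _1$ and $\bigcup _{i\geq 2}\Delta _i$ to meet in general position, which confines all critical values of $\Phi $ to the axes $\{uv=0\}$; passing to $C_t$ then removes them. That this general position is essential can be seen from the \emph{unstable} model $h=x(x-y^2)$ of two tangent branches: there $\Phi =(x,x-y^2)$ has critical values along $\{u=v\}$, which meets $C_t$, the fibration fails, and one computes $\chi \neq 0$. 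Thus the entire content of the lemma is the transversality of the $s\geq 2$ branches supplied by stability, and the remaining work is the routine verification that the resulting map is a proper stratified submersion over $\C^*$.
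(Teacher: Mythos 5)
Your overall strategy (realise $\tmf _X$ as the total space of a fibration over $C_t\cong \C ^*$ and use multiplicativity of $\chi$) is sound; indeed it is essentially a by-hand proof of Sakamoto's Theorem~2, which the paper's own proof simply cites. But there is a genuine gap at exactly the step you flag as the hard part: the assertion that general position of $\Delta _1$ and $\bigcup _{i\geq 2}\Delta _i$ ``confines all critical values of $\Phi =(h_1,k)$ to the axes.'' General position is a condition on the \emph{zero sets} alone, whereas the critical locus of $\Phi$ also contains points where a \emph{nonzero} level of $h_1$ is tangent to a \emph{nonzero} level of $k$ (the relative polar curve of the pair), and transversality of $\{h_1=0\}$ and $\{k=0\}$ says nothing about such points. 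Concretely, take $h_1=x$ and $k=y^2-z^3-xz^2$ in $\C ^3$: the singular locus of $\{k=0\}$ is the $x$-axis and both it and the regular part of $\{k=0\}$ meet $\{x=0\}$ transversally, yet $\Phi$ has critical points along $\{y=0,\ z=-2x/3\}$, giving critical values on the curve $v=-4u^3/27$, which meets every $C_t$. (Stratified general position does fail in this example, but only because Whitney regularity forces a zero-dimensional stratum at the origin of $\{k=0\}$ --- Whitney's classical example.) So the implication you need, from Whitney-stratified general position to emptiness of the polar curve, is a theorem of L\^e--Saito/Teissier type, not the ``routine verification'' your proposal describes, and it is not supplied by the tangent-branches example $x(x-y^2)$, which only illustrates failure in the non-transverse case.

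What actually makes the confinement true --- and what the paper uses --- is Mather's structure theorem for stable multi-germs: after a coordinate change in the target, the branch discriminants are cylinders over discriminants in \emph{disjoint} sets of coordinates, so a reduced equation is $g_1(z_1)g_2(z_2)\cdots g_s(z_s)$ with the $z_i$ pairwise disjoint. Then $\Phi =\left( g_1, \prod _{i\geq 2}g_i\right)$ has block-diagonal derivative, its critical set is $\{dg_1=0\}\cup \left\{ d\left( \prod _{i\geq 2}g_i\right) =0\right\}$, and since every irreducible component through the origin of the critical set of a function germ lies in that germ's zero set, all critical values near the origin do lie in $\{uv=0\}$; your fibration and Euler-characteristic argument then go through verbatim. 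At that point, however, you have reconstructed precisely the hypothesis of Sakamoto's Theorem~2 (Milnor fibre of a product of functions in disjoint variables fibres over the circle), which together with induction on the number of branches is the paper's entire proof. So your proposal is repairable --- insert Mather's normal form in place of the unsupported ``general position $\Rightarrow$ critical values on the axes'' step --- but as written the key step is unproved.
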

\begin{proof}
By Mather's classic result \cite{mather-stable} each branch is stable and the analytic manifolds for these branches intersect transversely. Without loss of generality we can assume that the stratum is zero-dimensional. 

If $g_i$ is a function defining the discriminant in $\C ^p$ for the branch $i$, then the discriminant of $G$ is defined by $g_1(z_1)g_2(z_2)\dots g_s(z_s)$ where $z_i$ is a set of coordinates in $\C ^p$ such that the sets $z_i$ and $z_j$ share no common coordinates for $i\neq j$.

Theorem~2 of  \cite{sakamoto} states that the Milnor fibre of a product of two functions is homotopically equivalent to the total space of a fibre bundle over the circle. Hence, the Euler characteristic of the Milnor fibre of a product is zero. Induction on the number of branches produces the result we require.
\end{proof}
Noting that a stratum will either arise from (the trivial unfolding of) a mono-germ or a multi-germ, 
from the lemma we can deduce the following.
\begin{theorem}
\label{dis-formula}
Suppose that $f:(\C ^n,0)\to (\C ^p,0)$, $n\geq p$, has an isolated instability and $f$ is corank $1$ or is in the nice dimensions. Let $MF_g$ denote the Milnor fibre of the function $g$ that defines the image of $f$.  Then we have
\begin{eqnarray*}
\chi (\mf _g ) - \chi (\dis (f)  ) &=& 
\sum_{{\text{\begin{tabular}{c}$X$ arises from \\ a mono-germ\end{tabular}}}} \widetilde{\chi} \left(  \tmf _X \right) \chi \left( \cl{X}, \cl{X}\backslash X \right) \\
& & \qquad -\sum_{{\text{\begin{tabular}{c}$X$ arises from \\ a non-mono-germ\end{tabular}}} }  \chi \left( \cl{X}, \cl{X}\backslash X \right) .
\end{eqnarray*}
\end{theorem}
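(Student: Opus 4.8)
The plan is to realise $\dis (f)$ and $\mf _g$ as the special and general fibres of a quasi-Milnor fibration and then combine Theorem~\ref{maineulerthm} with Lemma~\ref{sakamoto}, using the latter to collapse every contribution coming from a multi-germ.

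First I would produce the map $h$ of Section~\ref{sec:notation}. Since $f$ has an isolated instability and $n\geq p$ (so in particular $n\geq p-1$, whence the image $V(g)$ is a hypersurface), a stabilisation $\widetilde{f}$ exists whose image is the disentanglement $\dis (f)$; when $p=n+1$ this image is exactly the discriminant of $\widetilde{f}$. The constructions of \cite{vancyc}, and also \cite{siersma-wark}, provide a perturbation $h$ of $g$ for which the special fibre $\sf $ is $\dis (f)$ and the general fibre $\gf $ is homeomorphic to $\mf _g$; one verifies there that $h$ satisfies the six conditions of a quasi-Milnor fibration for suitable $\epsilon ,\eta $. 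With these identifications Theorem~\ref{maineulerthm} reads
\[
\chi (\mf _g)-\chi (\dis (f))=\sum_{X\in \SS }\widetilde{\chi }(\tmf _X)\,\chi \left( \cl{X},\cl{X}\backslash X \right) .
\]

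Next I would take for $\SS $ the stratification of $\sf =\dis (f)$ by stable type. Because $f$ is corank $1$ or in the nice dimensions, this partition is the canonical Whitney stratification (Section~6 of \cite{polar}, or Section~2.5 of \cite{dupwall}), so it is an admissible choice of $\SS $ with finitely many strata. Each stratum records an $\AA $-class of stable germ, which is either a mono-germ or a genuine multi-germ, and I would split the displayed sum into these two families. For every stratum $X$ arising from a multi-germ, Lemma~\ref{sakamoto} gives $\chi (\tmf _X)=0$, hence $\widetilde{\chi }(\tmf _X)=\chi (\tmf _X)-1=-1$. Substituting $-1$ into each such term turns the multi-germ part of the sum into $-\sum \chi \left( \cl{X},\cl{X}\backslash X \right)$, which is precisely the second sum in the statement, while the mono-germ terms are left untouched and furnish the first sum. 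This yields the claimed identity.

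The main obstacle is the first step. Unlike the abstract hypersurface set-up of Section~\ref{sec:notation}, here $\sf $ must be the geometrically specific object $\dis (f)$, and one must check that the parametrised perturbation coming from a stabilisation of $f$ genuinely defines a quasi-Milnor fibration: properness of $h$ over $D_\eta $, finiteness of the critical values arising from both $B_\epsilon (0)$ and $S_\epsilon (0)$ with the latter contained in the former, and transversality of $S_\epsilon (0)$ to all strata of $h^{-1}(0)$. Once this geometry is secured (citing \cite{vancyc} and \cite{siersma-wark}), the remainder is a purely formal manipulation of Theorem~\ref{maineulerthm} together with the vanishing supplied by Lemma~\ref{sakamoto}.
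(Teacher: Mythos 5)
Your proposal is correct and takes essentially the same route as the paper: realise $\dis (f)$ and $\mf _g$ as the special and general fibres of a quasi-Milnor fibration (you cite \cite{vancyc} and \cite{siersma-wark}; the paper cites a generalization of Proposition~4.4 of \cite{dm} together with Theorem~2.3 of \cite{siersma-wark}), then apply Theorem~\ref{maineulerthm} to the stratification by stable type and use Lemma~\ref{sakamoto} to replace $\widetilde{\chi }\left( \tmf _X \right)$ by $-1$ on the multi-germ strata. The only quibble is your parenthetical claim that the image $V(g)$ is a hypersurface because $n\geq p-1$: for $n\geq p$ the relevant hypersurface is the discriminant rather than the image of $f$, though the paper's own wording shares this looseness.
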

\begin{proof}
By a straightforward generalization of Proposition~4.4 of \cite{dm} to the case of $p=n+1$ and Theorem~2.3 of \cite{siersma-wark} (see also Section~4 of \cite{damon-highmult}) we can see that we have a quasi-Milnor fibration where the special fibre is the disentanglement of $f$ and the general fibre is the Milnor fibre of the function defining the image of $f$.

The result the follows from Theorem~\ref{maineulerthm} and Lemma~\ref{sakamoto}.
\end{proof}

Another important result we shall need concerns the multiple point spaces of the disentanglement map in the case of corank $1$ maps with $p=n+1$. For this we shall require the main result of \cite{mm}. That is, the multiple point spaces of the map $f$ can be defined using Vandermonde determinants and these are isolated complete intersection singularities. The corresponding multiple point spaces for the disentanglement map are then the Milnor fibres of these singularities.

The $k$th multiple point space of $f$, denoted $D^k(f)$, is defined using Vandermonde determinants as in \cite{mm}. For maps with isolated instabilities these are, in general, isolated complete intersection singularities and so have a Milnor number.
For stable maps, the multiple point spaces are non-singular, and hence the multiple point spaces for the disentanglement map are the Milnor fibres of the multiple point spaces of $f$.

Furthermore, $D^k(f)\subset \C ^{n+k+1}$ and the group of permutations on $k$ objects, denoted $S_k$, acts on $\C ^{n+k+1}$ by permutation of $k$ of the coordinates. With this we can define certain important subspaces as in Section 3 of \cite{mm} which we now describe.

Let $\PP = (k_1, \dots , k_1, k_2, \dots , k_2, \dots , k_r, \dots , k_r)$ be a partition of $k$ with $1\leq k_1 <k_2 < \dots < k_r$ and $k_i$ appearing $\alpha _i$ times (so $\sum \alpha _i k_i = k$). For every $\sigma \in S_k$ there is a partition $\PP _\sigma $, that is, in its cycle decomposition there are $\alpha _i$ cycles of length $k_i$. For any partition we can associate any one of a number of elements of $S_k$.

For a map $f$ we define $D^k(f,\PP _\sigma )$ to be the restriction of $D^k(f)$ to the fixed point set of $\sigma $. This is well-defined for a partition, i.e., if $\PP _{\sigma } = \PP _{\sigma '}$, then 
$D^k(f,\PP _{\sigma })$ is equivalent as an $S_k$-invariant isolated complete intersection
singularity to $D^k(f,\PP _{\sigma '})$.

As $D^k(f,\PP )$ is an isolated complete intersection singularity it has a Milnor fibre. We denote this by  $\mu \left( D^k(f,\PP )\right)$, where we take $\mu \left( D^k(f,\PP) \right) =0$ if $D^k(f,\PP )=\emptyset $.

If we can Whitney stratify the special fibre by stable type, then $\chi \left( \cl{X}, \cl{X}\backslash X \right)$ can be calculated using the $\mu \left( D^k(f ,\PP )\right)$ as we can see in the following theorem and corollary. 
\begin{theorem}
\label{firstlinsum}
Suppose that $f:(\C ^n,0)\to (\C ^{n+1},0)$ is a corank $1$ map with an isolated instability at $0$. Then, the disentanglement of $f$ can be Whitney stratified by stable type and for any stratum $X$ we have
\[
\chi \left( \overline{X} \right) = \chi \left( \cl{X} \right)= \sum _{k,\PP } c_{k,\PP } \mu \left( D^k(f ,\PP )\right) 
\]
where $c_{k,\PP }$ is a (possibly zero) rational number.
\end{theorem}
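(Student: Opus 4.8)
The plan is to realise each stratum closure as the image of a multiple point space of the stabilisation $\widetilde{f}$, and then to compute its Euler characteristic by passing to symmetric group quotients. First I would dispose of the two easy points. The assertion that the disentanglement admits a Whitney stratification by stable type in the corank $1$ case is exactly the statement cited from Section~6 of \cite{polar} (or Section~2.5 of \cite{dupwall}), so nothing new is needed there. The equality $\chi\left(\overline{X}\right)=\chi\left(\cl{X}\right)$ is immediate from the collaring examples: $\cl{X}$ is the closure of a stratum in $B_\epsilon(0)$, hence collared in $\overline{B_\epsilon(0)}$, so the inclusion $\cl{X}\hookrightarrow\overline{X}$ is a homotopy equivalence.

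The substance is the second equality. A stratum $X$ of $\dis(f)$ is determined by a stable type, equivalently by a partition $\PP$ recording the branches and their contact. I would identify $\cl{X}$ with the image under the projection $\C^{n+k+1}\to\C^{n+1}$ of the corresponding multiple point space $D^k(\widetilde{f},\PP)$ of the stabilisation, modulo the residual action of the subgroup $G_\PP\leq S_k$ permuting branches of equal type. For corank $1$ maps this identification is governed by the Marar--Mond description in \cite{mm}, and the key input already available to us is that $D^k(\widetilde{f},\PP)$ is the Milnor fibre of the isolated complete intersection singularity $D^k(f,\PP)$. Hence, writing $d_{k,\PP}$ for its dimension,
\[
\chi\left(D^k(\widetilde{f},\PP)\right)=1+(-1)^{d_{k,\PP}}\mu\left(D^k(f,\PP)\right).
\]

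To compute $\chi\left(\cl{X}\right)$ from this I would use the standard fact that for a finite group $G$ acting on a suitably nice space $Z$ one has $\chi(Z/G)=\frac{1}{|G|}\sum_{\sigma\in G}\chi\left(Z^\sigma\right)$, together with the defining property that the fixed-point set of $\sigma$ inside $D^k(\widetilde{f})$ is precisely $D^k(\widetilde{f},\PP_\sigma)$. Averaging over $G_\PP$, and iterating over the nested fixed-point strata so that additivity of $\chi$ handles the passage from open strata to their closures, expresses $\chi\left(\cl{X}\right)$ as a rational linear combination of the quantities $\chi\left(D^{k'}(\widetilde{f},\PP')\right)$, each of which is affine-linear in $\mu\left(D^{k'}(f,\PP')\right)$ by the displayed formula. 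Collecting terms yields the claimed sum $\sum_{k,\PP}c_{k,\PP}\,\mu\left(D^k(f,\PP)\right)$, with rational coefficients $c_{k,\PP}$ depending only on the combinatorics of the partitions and the ambient dimensions.

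The main obstacle is exactly the bookkeeping at this final step. Each substitution of the displayed formula contributes a constant term $\frac{1}{|G_\PP|}\sum_{\sigma\in G_\PP}1$ alongside the $\mu$-terms, and one must check that these constants cancel across the inclusion--exclusion over the multiplicity filtration so that no constant survives in the final expression; I expect this to follow from the telescoping of the additive decomposition of $\cl{X}$ into its stable-type strata, but verifying it cleanly, and simultaneously pinning down the $c_{k,\PP}$ for the ensuing corollary, is where the real work lies. A secondary point needing care is the justification that in the corank $1$ setting the projection $D^k(\widetilde{f},\PP)\to\C^{n+1}$ is generically $|G_\PP|$-to-one with the expected branch behaviour, so that the quotient formula genuinely computes $\chi\left(\cl{X}\right)$.
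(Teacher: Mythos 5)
Your proposal is sound, but it takes a genuinely different route from the paper. The paper realises $\cl{X}$ as the image of the restriction of the disentanglement map $\widetilde{f}$ to a suitable subset $S$ of the source, and then computes the Euler characteristic of that image via the image-computing spectral sequence: the alternating homology of the multiple point spaces $D^j(\widetilde{f}|S)$, handled by the Goryunov--Mond double induction of Theorem~2.8 of \cite{gomo} (needed because some of the $D^j(\widetilde{f}|S)$ are themselves images of maps) together with the character-theoretic computation of alternating Euler characteristics from Section~2.2 of \cite{hk}. You instead stay in the target with the single space $D^k(\widetilde{f},\PP)$ attached to the stratum, use the unsigned quotient formula $\chi(Z/G)=\frac{1}{|G|}\sum_{\sigma\in G}\chi\left(Z^\sigma\right)$, and correct by inclusion--exclusion over deeper strata; since every fixed-point set is again some $D^{k'}(\widetilde{f},\PP')$, hence a Milnor fibre of an ICIS, both routes end in rational combinations of the $\mu\left(D^k(f,\PP)\right)$. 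Two remarks on the comparison. First, what you call a secondary point is actually the crux of your method: the map $D^k(\widetilde{f},\PP)/G_\PP\to\cl{X}$ is generically one-to-one but genuinely fails to be injective over deeper strata (for $\C^3\to\C^4$, for instance, $D^3(\widetilde{f})/S_3\to\overline{S_{1,1,1}}$ is four-to-one over quadruple points, and scheme-theoretic ``diagonal'' points of $D^k$ appear over non-immersive points such as cross-caps), so your correction terms are not optional bookkeeping; they carry exactly the combinatorial content that, in the paper's route, is absorbed automatically by the signs in the alternating homology and is outsourced to \cite{calcgen} and \cite{hk}. The trade is: the paper buys a systematic, dimension-independent mechanism (and homological, not just Euler-characteristic, information) at the price of spectral-sequence machinery and the double induction; you buy elementarity (only additivity of $\chi_c$, the finite-group quotient formula, and the Marar--Mond local structure from \cite{mm}) at the price of explicit orbit counts over every adjacency. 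Second, you need not force the constant terms to cancel: constants genuinely survive in these formulas (compare Proposition~\ref{prop:strata}, where $1$, $\notemp_2$ and $\notemp_3$ appear, and the paper's own proof sketch, which speaks of ``Milnor number plus $1$ divided by a suitable factor''), so the displayed equality should be read as an affine-linear expression; insisting on exact cancellation would be chasing something that neither your argument nor the paper's actually delivers. With those provisos, your outline is at essentially the same level of completeness as the paper's own proof.
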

\begin{proof}
We stratify the disentanglement by stable type. Let $X$ be a stratum. Since the disentanglement map is a corank $1$ stable map $X$ will correspond to the top stratum in some $D^k(\widetilde{f},\PP )$ which by the Marar-Mond description is the Milnor fibre of the isolated complete intersection singularity $\widetilde{D}^k(\widetilde{f},\PP )$.

If $X$ is zero dimensional, then the Milnor number of $D^k(f,\PP )$ plus $1$ and divided by a suitable factor depending on $\PP $ will calculate $\chi \left( \overline{X} \right) = \chi (X)$.

For higher dimensional strata we use the double induction technique of Theorem~2.8 of \cite{gomo}, and also used in Section~4.3 of \cite{bojo} and Section~2 of \cite{calcgen}. That is, the closure of the stratum is the image of the restriction of the disentanglement map to some set $S$ say. The multiple point spaces of $f|S$ are the same as those of $f$ except those that are the image of some map. This map has the same properties of $f$ that allow a double induction. The Euler characteristic of the alternating homology of the spaces can be calculated using the material in Section~2.2 of \cite{hk}. (See also \cite{excellent}.)
\end{proof}

\begin{corollary}
\label{linsum}
Suppose that $f:(\C ^n,0)\to (\C ^{n+1},0)$ is a corank $1$ map with an isolated instability. Then, the disentanglement can be Whitney stratified by stable type and for any stratum $X$ we have
\[
\chi \left( \overline{X} , \cup_{Y<X} \overline{Y}  \right) = \chi \left( \cl{X} , \cl{X} \backslash X \right)= \sum _{k,\PP } b_{k,\PP } \mu \left( D^k(f ,\PP )\right)
\]
where $b_{k,\PP }$ is a (possibly zero) rational number.
\end{corollary}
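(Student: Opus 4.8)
The first equality is precisely Lemma~\ref{lemma2}, and the existence of the stratification by stable type is established in the course of Theorem~\ref{firstlinsum} from the corank $1$ hypothesis, so the real task is to show that $\chi\left(\cl{X},\cl{X}\backslash X\right)$ is a rational combination of the $\mu\left(D^k(f,\PP)\right)$. My plan is to deduce this from Theorem~\ref{firstlinsum}, which already supplies the analogous statement for $\chi\left(\cl{X}\right)$, by passing through the Euler characteristic with compact support and performing a triangular inversion over the (finite) poset of strata.

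First I would recall from the final computation in the proof of Theorem~\ref{maineulerthm} the identity $\chi\left(\cl{X},\cl{X}\backslash X\right)=\chi _c(X)$. Next, stratifying $\cl{X}$ by its strata and using the frontier condition (so that $\cl{X}\backslash X$ is the disjoint union of the strata $Y<X$), additivity of $\chi _c$ gives
\[
\chi _c\left(\cl{X}\right)=\sum_{Y\leq X}\chi _c(Y),
\]
where $Y\leq X$ ranges over the strata contained in $\cl{X}$. By Lemma~\ref{collared} the left-hand side equals $\chi\left(\cl{X}\right)$, which by Theorem~\ref{firstlinsum} is a rational combination $\sum_{k,\PP}c_{k,\PP}\mu\left(D^k(f,\PP)\right)$.

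It remains to invert this system. Since $\SS$ has only finitely many strata (condition (vi) in the definition of a quasi-Milnor fibration) and $Y<X$ forces $\dim Y<\dim X$, the poset of strata is finite and well-founded, so I would induct on $\dim X$. Rewriting the displayed relation as
\[
\chi _c(X)=\chi\left(\cl{X}\right)-\sum_{Y<X}\chi _c(Y)
\]
expresses $\chi _c(X)$ as a $\Q$-linear combination of $\chi\left(\cl{X}\right)$ and the $\chi _c(Y)$ with $Y<X$; by the inductive hypothesis each of the latter is already a rational combination of the $\mu\left(D^k(f,\PP)\right)$, and $\chi\left(\cl{X}\right)$ is one by Theorem~\ref{firstlinsum}. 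Finite sums and differences of rational combinations are again rational combinations, yielding the coefficients $b_{k,\PP}\in\Q$. The base case is a $0$-dimensional stratum, where $\chi _c(X)=\chi\left(\cl{X}\right)=1$ and the formula is just that of Theorem~\ref{firstlinsum}.

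The genuine content of the corollary lies entirely in Theorem~\ref{firstlinsum}; what remains is bookkeeping, and the only points needing care are the identity $\chi\left(\cl{X},\cl{X}\backslash X\right)=\chi _c(X)$ and the preservation of rationality through the inversion. Both are immediate from the results already in hand, so I do not anticipate a serious obstacle beyond organising the induction correctly over the poset of strata.
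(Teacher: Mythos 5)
Your proposal is correct, but it takes a genuinely different route from the paper. The paper stays entirely with ordinary Euler characteristics: it writes $\cup_{Y<X}\overline{Y}=\cup_{i=1}^{r}\overline{Y_i}$ and proves that $\chi\left(\cup_{i=1}^{r}\overline{Y_i}\right)$ is a rational combination of the $\mu\left(D^k(f,\PP)\right)$ by a double induction, on the number of components $r$ and on the dimension of the union, the inductive step being inclusion-exclusion $\chi(A\cup B)=\chi(A)+\chi(B)-\chi(A\cap B)$ together with the geometric observation that $\left(\cup_{i=1}^{r}\overline{Y_i}\right)\cap\overline{Y_{r+1}}$ is again a finite union of closures of strata $\overline{Z_i}$ of strictly smaller dimension; combining with $\chi\left(\overline{X}\right)$ from Theorem~\ref{firstlinsum} then gives the relative Euler characteristic. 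You instead pass through $\chi_c$: the identity $\chi\left(\cl{X},\cl{X}\backslash X\right)=\chi_c(X)$ already derived at the end of the proof of Theorem~\ref{maineulerthm}, additivity of $\chi_c$ over the frontier-condition partition $\cl{X}=\sqcup_{Y\leq X}Y$, Lemma~\ref{collared} to replace $\chi_c\left(\cl{X}\right)$ by $\chi\left(\cl{X}\right)$, and a single induction on dimension to invert the resulting triangular system. What your route buys: it removes the inclusion-exclusion bookkeeping and, more substantively, the step the paper must justify concerning the intersections $\overline{Y_i}\cap\overline{Y_{r+1}}$ (the paper's $Z_i$'s, which need not be $Y_i\cap Y_{r+1}$); you need only the frontier condition and finiteness of the stratification, both already in play. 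What the paper's route buys: it never invokes the $\chi_c$ formalism or the partition of $\cl{X}$ into strata, working only with closed collared sets, so it is self-contained given Theorem~\ref{firstlinsum}. One shared caveat, not specific to your argument: at zero-dimensional strata constants appear (your base case gives $\chi_c(X)=1$, and the paper's Proposition~\ref{prop:strata} produces terms such as $\notemp_k$), which are not literally homogeneous expressions $\sum b_{k,\PP}\,\mu\left(D^k(f,\PP)\right)$; this imprecision is inherited from Theorem~\ref{firstlinsum} itself, whose proof computes ``the Milnor number plus $1$ divided by a suitable factor'', so your proof is faithful to the paper's level of rigour on this point.
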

\begin{proof}
The theorem above shows that $\chi \left( \overline{X} \right)= \sum _{k,\PP } c_{k,\PP } \mu \left( D^k(f ,\PP ) \right)$.
We need to show that $\cup_{Y<X} \overline{Y}$ has the same property. But $\cup_{Y<X} \overline{Y}=\cup_{i=1}^r \overline{Y_i}$ for some $r$ and some $Y_i$ where $\overline{Y_i}\not\subseteq \overline{Y_j}$  for all $i,j$ with $i\neq j$. Hence, we must show that
\begin{eqnarray}
\label{eqn:induction}
\chi \left( \cup_{i=1}^r \overline{Y_i}\right) = \sum _{k,\PP } d_{k,\PP } \mu \left( D^k(f ,\PP )\right) 
\end{eqnarray} 
for some constants $d_{k,\PP }$.
To do this we use double induction on $r$ and the dimension of $\cup_{i=1}^r \overline{Y_i}$. 

For zero-dimensional $Y_i$ it is easy to show that the statement is true for all $r$ as $Y_i$ corresponds to some zero-dimensional multiple point space of the disentanglement map. 

Thus suppose that (\ref{eqn:induction}) hold for all $\cup_{i=1}^r \overline{Y_i}$ of dimension less than $d$ and for all $r$.
Now consider a union where $\cup_{i=1}^r \overline{Y_i}$ has dimension $d$. 

The statement (\ref{eqn:induction}) holds when $r=1$ by the theorem. Thus suppose that the statement is true for $r$ and consider the case of $r+1$.
Using the standard inclusion-exclusion formula, $\chi (A\cup B)=\chi (A) + \chi (B) - \chi (A\cap B)$, we have
\begin{eqnarray*}
\chi \left( \cup_{i=1}^{r+1} \overline{Y_i} \right) &=& \chi \left( \cup_{i=1}^r \overline{Y_i} \right)
+ \chi \left(  \overline{Y_{r+1}} \right) - \chi \left( \left( \cup_{i=1}^r \overline{Y_i} \right) \cap \overline{Y_{r+1}} \right) \\
&=& \chi \left( \cup_{i=1}^r \overline{Y_i} \right)
+ \chi \left(  \overline{Y_{r+1}} \right) - \chi \left(  \cup_{i=1}^r \left( \overline{Y_i}  \cap \overline{Y_{r+1}} \right) \right) \\
&=& \chi \left( \cup_{i=1}^r \overline{Y_i} \right)
+ \chi \left(  \overline{Y_{r+1}} \right) - \chi \left(  \cup_{i=1}^q  \overline{Z_i}  \right) ,
\end{eqnarray*}
for some $q$ and $q$ strata $Z_i$, $1\leq i \leq q$. Note that $Z_i$ is not just $Y_i\cap Y_{r+1}$  as this intersection may be empty. However, since the disentanglement has corank $1$ singularities and has a finite number of strata, then there exists a finite number of $Z_i$ and each has dimension less than $d$. 

Therefore, on the right-hand side of the last equation the first term can be written as a sum of Milnor numbers by the inductive hypothesis on $r$, the second has the same representation by the theorem, and the third terms has the same property by the inductive hypothesis on the dimension of the union.

Hence, by induction, statement (\ref{eqn:induction}) holds for all $r$ and all dimensions.
\end{proof}

\section{Maps from $3$-space to $4$-space}
\label{sec:3to4}
To exemplify Corollary~\ref{linsum} we could use the results from the first half of \cite{diswe} which dealt with maps 
of the form $f:(\C ^2,0)\to (\C ^3,0)$. Instead we shall go to the next case, that of $f:(\C ^3,0)\to (\C ^4,0)$, as we shall need it when we generalize the results from the second half of \cite{diswe}.

First, we note that each singular point of the disentanglement is the image of a stable map. The precise list of possible singularities is the following.
\begin{itemize}
\item $A_{1}$: The point in the disentanglement is non-singular.
\item $A_{1,1}$: A transverse crossing of two $3$-planes, i.e., an ordinary double point.
\item $A_{1,1,1}$: A transverse crossing of three $3$-planes, i.e., an ordinary triple point.
\item $A_{1,1,1,1}$: A transverse crossing of four $3$-planes, i.e., an ordinary quadruple point.
\item $A_{2}$: The trivial unfolding of a Whitney cross-cap.
\item $A_{1,2}$: The transverse intersection of a $3$-plane and $A_2$.
\end{itemize}
We have the following adjacencies:

\begin{equation*}
\xymatrix{
A_1 &  \ar[l] A_{1,1}  & A_{1,1,1} \ar[l]  & A_{1,1,1,1} \ar[l] \\
&  &  A_2 \ar[lu]  & A_{1,2} \ar[lu]\ar[l]   \\
}
\end{equation*}

We shall let $S_X$ denote the set of points in the disentanglement associated to the singularity type $A_X$.
We have $\dim (S_1)=3$, $\dim S_{1,1}=2$, $\dim S_{1,1,1}=\dim S_{2}=1$ and $\dim S_{1,1,1,1}=\dim S_{1,2}=0$.
These sets are manifolds and form the canonical Whitney stratification of the disentanglement, see Section~6 of \cite{polar}.

Except for $A_2$ the maps corresponding to these singular sets are multi-germs and hence their transverse Milnor fibre will have Euler characteristic equal to zero by Lemma~\ref{sakamoto}. For $A_2$ the transverse Milnor fibre is the Milnor fibre of the function defining the image of the Whitney cross-cap. This has homotopy type of the $2$-sphere (see \cite{mass-le-hyp} page 4) and hence the transverse Milnor fibre for $S_2$ has Euler characteristic equal to $2$.

For any stratum $S$ we can calculate the Euler characteristic of the pair $(\overline{S}, \overline{S} \backslash S)$ using the material on $k$-disentanglements from \cite{calcgen}. 
The {\defn{$k$th disentanglement of $f$}} is the closure of the
set of points in the image of $\widetilde{f}$, the disentanglement map, that have $k$ or more preimages. We denote this by $\dis _k(f)$.
It is shown in \cite{bojo} says that if $f:(\C ^n,0)\to (\C ^{n+1},0)$ is
corank $1$ and finitely $\AA $-determined, then the $k$th disentanglement is
empty or is a bouquet of spheres of dimension $n-k+1$. The number of spheres in the bouquet is 
denoted by $\mu _{I_k}$. We shall write $\mu _{I_1}(f)$ simply as  $\mu _I(f)$.

We can relate the $k$th disentanglement to the sets $S_X$ and their closures. Furthermore, Theorem~2.8 in \cite{calcgen} shows how to calculate $\mu _{I_k}$ for a map $f:(\C ^3,0)\to (\C ^4,0)$ with an isolated instability in terms of the Milnor numbers of the multiple point spaces (which are isolated complete intersection singularities). Since $\dis _k(f)$ is a bouquet of spheres of a certain dimension, then we can calculate its Euler characteristic.

An important point to note is that in Theorem~2.8 of \cite{calcgen} certain terms in the formulae depend on the $k$th multiple point being non-empty. For example, in (ii) of Theorem~2.8 of \cite{calcgen} the $-1/3$ term is included only if $D^3$ is non-empty. Similarly in (iii). To keep track of this notion we shall make the following definition: 
\[
\notemp _k (f) = \left\{
\begin{array}{ll}
0, & {\text{ if }} \widetilde{D} ^k( \widetilde{f}) =\emptyset , \\
1, & {\text{ if }} \widetilde{D} ^k( \widetilde{f}) \neq \emptyset . 
\end{array}
\right.
\]

\begin{proposition}
\label{prop:strata}
For a map $f:(\C ^3,0)\to (\C ^4,0)$, and denoting 
by $Q$ the number of quadruple points appearing in the disentanglement $\widetilde{f}$, we have
\begin{eqnarray*}
\chi \left( \overline{S_1} \right) &=& 1- \left( \frac{1}{2} ( \mu (D^2) +\mu (D^2|H) )  +\frac{1}{6} ( \mu (D^3) +3\mu (D^3|H_1) +2\notemp _3 ) +Q \right) , \\
\chi \left( \overline{S_{1,1}} \right) &=&\notemp _2+ \frac{1}{2} \left( \mu (D^2) -\mu (D^2|H) \right)  +\frac{1}{3} \left(\mu(D^3) -\notemp _3\right) +3Q,\\
\chi \left( \overline{S_{1,1,1}} \right) &=&\notemp _3 - \left( \frac{1}{6} \left( \mu (D^3)-3\mu(D^3|H_1) +2\notemp _3 \right) +3Q \right) ,\\
\chi \left( \overline{S_2} \right) &=& \chi \left( D^2|H \right) = \notemp _2- \mu \left( D^2|H \right), \\
\chi \left( \overline{S_{1,1,1,1}} \right) &=& \chi \left( D^4\right) = Q ,\\
\chi \left( \overline{S_{1,2}} \right) &=& \chi \left( D^3|H_1\right) = \mu \left( D^3|H_1 \right) +\notemp _3 . 
\end{eqnarray*}
\end{proposition}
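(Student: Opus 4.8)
The plan is to implement, for the six concrete stable types listed, the general principle of Theorem~\ref{firstlinsum}: the closure of each stratum is, up to Euler characteristic, built out of Milnor fibres of the multiple point spaces of $f$. The strata split into two families. The ``diagonal'' strata $S_2$, $S_{1,2}$ and $S_{1,1,1,1}$ have closures that are (images of) the Milnor fibre of a single restricted multiple point space, an isolated complete intersection singularity, so their Euler characteristics come from the bouquet-of-spheres structure of an ICIS Milnor fibre. The ``multiple-point'' strata $S_1$, $S_{1,1}$ and $S_{1,1,1}$ have closures that coincide with the iterated disentanglements $\dis _k(f)$, whose homotopy type is already known.

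First I would read off the closure relations from the adjacency diagram. Since $S_1$ is dense in the image, $\overline{S_1}=\dis (f)=\dis _1(f)$. Every point of $S_{1,1,1}$, $S_{1,1,1,1}$ and $S_{1,2}$ has at least two preimages, and the cross-cap locus $S_2$ is a limit of double points, so the closure of the double-point stratum is exactly the second disentanglement: $\overline{S_{1,1}}=\dis _2(f)$. The same bookkeeping gives $\overline{S_{1,1,1}}=\dis _3(f)$ and $\overline{S_{1,1,1,1}}=\dis _4(f)$. The point needing care is that $S_2$ lies in $\dis _2(f)$ but not in $\dis _3(f)$ (a cross-cap carries nearby double points but no triple points), while $S_{1,2}$ lies in the closures of both $S_{1,1,1}$ and $S_2$; this is precisely what the adjacency diagram records.

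For the diagonal strata I would argue directly. The cross-cap locus of the stabilisation $\widetilde f$ is parametrised one-to-one by $D^2(\widetilde f)|H$, the Milnor fibre of the curve singularity $D^2(f)|H$; hence $\chi (\overline{S_2})=\chi (D^2|H)$, and since a $d$-dimensional isolated complete intersection singularity has Milnor fibre homotopy equivalent to a bouquet of $\mu $ spheres of dimension $d$, the curve case $d=1$ gives $\notemp _2-\mu (D^2|H)$, the factor $\notemp _2$ recording non-emptiness. Applying the same argument to the zero-dimensional spaces $D^3(f)|H_1$ and $D^4(f)$ yields $\chi (\overline{S_{1,2}})=\chi (D^3|H_1)=\mu (D^3|H_1)+\notemp _3$ and $\chi (\overline{S_{1,1,1,1}})=\chi (D^4)=Q$. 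For the three multiple-point closures I would use that $\dis _k(f)$ is a bouquet of $\mu _{I_k}$ spheres of dimension $4-k$, so that $\chi (\dis _k(f))=1+(-1)^{4-k}\mu _{I_k}$, giving
\[
\chi (\overline{S_1})=1-\mu _{I_1},\quad \chi (\overline{S_{1,1}})=1+\mu _{I_2},\quad \chi (\overline{S_{1,1,1}})=1-\mu _{I_3},
\]
with $\mu _{I_1}=\mu _I$. Substituting the explicit expressions for $\mu _I$, $\mu _{I_2}$ and $\mu _{I_3}$ supplied by Theorem~2.8 of \cite{calcgen} in terms of $\mu (D^2)$, $\mu (D^3)$, $\mu (D^2|H)$, $\mu (D^3|H_1)$, $Q$ and the indicators $\notemp _k$ then produces the three remaining formulas.

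The hard part will be the bookkeeping rather than any deep geometry: I must match the ``include this term only if $\widetilde D^k(\widetilde f)$ is non-empty'' caveats of Theorem~2.8 of \cite{calcgen} to the $\notemp _k$ in the statement, fix every sign in passing from $\mu _{I_k}$ to an Euler characteristic, and confirm that each formula degenerates to $\chi (\emptyset )=0$ when the relevant multiple point space is empty. To be safe I would check this degeneration in the two extreme cases, namely $D^2$ empty (which forces all higher spaces, and hence all other terms, to vanish) and $D^3$ empty with $D^2$ non-empty, thereby pinning down the placement of the $\notemp _2$ and $\notemp _3$ terms.
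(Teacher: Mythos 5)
Your proposal is correct and follows the paper's own proof essentially step for step: it identifies $\overline{S_1}$, $\overline{S_{1,1}}$, $\overline{S_{1,1,1}}$ with the disentanglements $\dis _1(f)$, $\dis _2(f)$, $\dis _3(f)$ and uses their bouquet structure together with Theorem~2.8 of \cite{calcgen}, while handling $S_2$, $S_{1,2}$, $S_{1,1,1,1}$ via the bijections with $D^2|H$, $D^3|H_1$ and $D^4$. The only cosmetic difference is that the paper quotes the $\notemp $-adjusted expressions directly from (the proof of) Theorem~2.8, whereas you reconstruct them from $\chi \left( \dis _k(f) \right) = 1+(-1)^{4-k}\mu _{I_k}$ plus the emptiness bookkeeping you explicitly flag.
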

\begin{proof}
Obviously $\dis (f)=\dis _1(f)=\overline{S_1}$. The number $\mu _I=\mu _{I_1} $ counts the number of spheres in the homotopy type of this space and so from Theorem~2.8 of \cite{calcgen}, we have
\[
\chi \left( \overline{S_1} \right) =
1- \left( 
\frac{1}{2} ( \mu (D^2) +\mu (D^2|H) ) 
+\frac{1}{6} ( \mu (D^3) +3\mu (D^3|H_1) +2\notemp _3 ) +Q \right) .
\]
Note that $Q=\chi \left( S_{1,1,1,1}\right) =\mu \left( D^4(f) \right) +\notemp _4 $.

The set $\dis _2(f)$ is homotopically equivalent to a bouquet of $\mu _{I_2}$ spheres and coincides, as a set, with $\overline{S_{1,1}}$. From the proof of Theorem~2.8 of \cite{calcgen} we find 
\[
\chi \left( \overline{S_{1,1}} \right) =
\notemp _2 + \frac{1}{2} ( \mu (D^2) -\mu (D^2|H) )  +\frac{1}{3} (\mu(D^3) -\notemp _3 ) +3Q.
\]

Similarly $\dis _3=\overline{S_{1,1,1}}$ and so again from Theorem~2.8 of \cite{calcgen}, we find
\[
\chi \left( \overline{S_{1,1,1}} \right) =
\notemp _3 - \left( \frac{1}{6} \left( \mu (D^3)-3\mu(D^3|H_1) +2\notemp _3 \right) +3Q \right) .
\]

A point in the curve $S_2$ corresponds to a Whitney cross-cap point. These correspond to points of $D^2|H$. The correspondence is in fact a bijection. Hence, as $\overline{S_2}$ is homotopically equivalent to a wedge of circles, we see that 
\[
\chi \left( \overline{S_2} \right) = \chi \left( D^2|H \right) = \notemp _2 - \mu \left( D^2|H \right) .
\]
The spaces $S_{1,1,1,1}$ and $S_{1,2}$ are zero-dimensional and so 
\[
\chi \left( \overline{S_{1,1,1,1}} \right) = \chi \left( D^4\right) = Q 
\]
and
\[
\chi \left( \overline{S_{1,2}} \right) = \chi \left( D^3|H_1\right) = \mu \left( D^3|H_1 \right) +\notemp _3 .  
\]
\end{proof}

We can now use the adjacency diagram for the singularities to calculate  $\chi \left( \overline{S_X}, \overline{S_X} \backslash S_X\right) $ for every stratum $S_X$. 
\begin{proposition}
\label{prop:relstrata}
We have
\begin{eqnarray*}
\chi \left( \overline{S_{1,1}}, \overline{S_{1,1}} \backslash S_{1,1} \right) 
&=&  \chi \left( \overline{S_{1,1}}\right) - \chi \left(  \overline{S_{1,1,1}} \right) - \chi \left(  \overline{S_{2}}  \right) + \chi \left(  S_{1,2}   \right) ,\\
\chi \left( \overline{S_{1,1,1}}, \overline{S_{1,1,1}} \backslash S_{1,1,1} \right) 
&=&\chi \left( \overline{S_{1,1,1}} \right) - \chi \left( S_{1,1,1,1} \right)  - \chi \left( S_{1,2}  \right) , \\
\chi \left( \overline{S_{2}}, \overline{S_{2}} \backslash S_{2} \right) 
&=& \chi \left( \overline{S_{2}}\right) - \chi \left( S_{1,2} \right).
\end{eqnarray*}
\end{proposition}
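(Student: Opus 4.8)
The plan is to read the closure relations directly off the adjacency diagram and then evaluate each relative Euler characteristic through the identity $\chi(A,B) = \chi(A) - \chi(B)$, which holds for the ordinary Euler characteristic whenever $B$ is a closed subspace of a compact space $A$ (via the long exact sequence of the pair, or equivalently additivity). Every $\overline{S_X}$ is compact and each boundary $\overline{S_X}\backslash S_X$ is a closed union of lower-dimensional strata, so this identity applies throughout and the entire problem reduces to computing $\chi(\overline{S_X}\backslash S_X)$ in each of the three cases.

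Reading the diagram with the convention that an arrow places its source stratum in the closure of its target, I would first record the relevant closures: $\overline{S_2} = S_2 \cup S_{1,2}$, then $\overline{S_{1,1,1}} = S_{1,1,1}\cup S_{1,1,1,1}\cup S_{1,2}$, and finally $\overline{S_{1,1}} = S_{1,1}\cup S_{1,1,1}\cup S_2\cup S_{1,1,1,1}\cup S_{1,2}$. The $S_2$ case is then immediate, since $\overline{S_2}\backslash S_2 = S_{1,2}$ gives $\chi(\overline{S_2},\overline{S_2}\backslash S_2)=\chi(\overline{S_2})-\chi(S_{1,2})$. For $S_{1,1,1}$ the boundary $\overline{S_{1,1,1}}\backslash S_{1,1,1} = S_{1,1,1,1}\cup S_{1,2}$ is a disjoint union of two closed zero-dimensional strata, so additivity of $\chi$ yields $\chi(\overline{S_{1,1,1}}\backslash S_{1,1,1}) = \chi(S_{1,1,1,1}) + \chi(S_{1,2})$, and subtracting from $\chi(\overline{S_{1,1,1}})$ produces the second formula.

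The one case requiring care, and the step I expect to be the main obstacle, is $S_{1,1}$, because its boundary $\overline{S_{1,1}}\backslash S_{1,1} = \overline{S_{1,1,1}}\cup \overline{S_2}$ is a union of two closures that overlap rather than a disjoint union. Here I would pin down the overlap precisely: the two incoming arrows to $A_{1,2}$ (one from $A_{1,1,1}$, one from $A_2$) show that the only stratum common to both closures is the zero-dimensional one, so $\overline{S_{1,1,1}}\cap\overline{S_2} = S_{1,2}$. Applying inclusion--exclusion, $\chi(\overline{S_{1,1}}\backslash S_{1,1}) = \chi(\overline{S_{1,1,1}}) + \chi(\overline{S_2}) - \chi(S_{1,2})$, and subtracting this from $\chi(\overline{S_{1,1}})$ gives the first formula. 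Once this overlap is correctly identified from the diagram, all three identities become straightforward bookkeeping with the additivity of $\chi$ and inclusion--exclusion.
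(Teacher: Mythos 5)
Your proposal is correct and follows essentially the same route as the paper: in each case you identify the boundary $\overline{S_X}\backslash S_X$ from the adjacency diagram, use $\chi(A,B)=\chi(A)-\chi(B)$, and handle the one non-trivial case $S_{1,1}$ by inclusion--exclusion with the identification $\overline{S_{1,1,1}}\cap\overline{S_2}=S_{1,2}$, exactly as in the paper's proof. Your explicit justification of that overlap via the two arrows into $A_{1,2}$ is a welcome touch of added detail, but it is not a different argument.
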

\begin{proof}
For $S_{1,1}$ we have 
\begin{eqnarray*}
\chi \left( \overline{S_{1,1}}, \overline{S_{1,1}} \backslash S_{1,1} \right) 
&=& \chi \left( \overline{S_{1,1}}, \overline{S_{1,1,1}} \cup  \overline{S_{2}}  \right) \\
&=& \chi \left( \overline{S_{1,1}}\right) - \chi \left(  \overline{S_{1,1,1}} \cup  \overline{S_{2}}  \right) \\
&=& \chi \left( \overline{S_{1,1}}\right) - \chi \left(  \overline{S_{1,1,1}} \right) - \chi \left(  \overline{S_{2}}  \right) + \chi \left(  \overline{S_{1,1,1}} \cap  \overline{S_{2}}  \right) \\
&=& \chi \left( \overline{S_{1,1}}\right) - \chi \left(  \overline{S_{1,1,1}} \right) - \chi \left(  \overline{S_{2}}  \right) + \chi \left(  S_{1,2}   \right) .
\end{eqnarray*}
Therefore, we can calculate the Euler characteristic of this pair using the Milnor numbers of the multiple point spaces.

For the stratum $S_{1,1,1}$ we see that 
\begin{eqnarray*}
\chi \left( \overline{S_{1,1,1}}, \overline{S_{1,1,1}} \backslash S_{1,1,1} \right) 
&=& \chi \left( \overline{S_{1,1,1}}, S_{1,1,1,1} \cup S_{1,2}  \right) \\
&=& \chi \left( \overline{S_{1,1,1}} \right) - \chi \left( S_{1,1,1,1} \cup  S_{1,2}  \right) \\
&=&  \chi \left( \overline{S_{1,1,1}} \right) - \chi \left( S_{1,1,1,1} \right)  - \chi \left( S_{1,2}  \right) .
\end{eqnarray*}
So, again we can calculate the Euler characteristic in terms of Milnor numbers of multiple point spaces.

For $S_2$ we have 
\begin{eqnarray*}
\chi \left( \overline{S_{2}}, \overline{S_{2}} \backslash S_{2} \right) 
&=& \chi \left( \overline{S_{2}}, S_{1,2} \right) \\
&=& \chi \left( \overline{S_{2}}\right) - \chi \left( S_{1,2} \right) .
\end{eqnarray*}
\end{proof}

Combining these two preceding propositions with Corollary~\ref{dis-formula} we produce the following.
\begin{theorem}
\label{c3toc4thm}
Suppose that $f:(\C ^3,0)\to (\C ^4,0)$ is a corank $1$ germ with an isolated instability at $0$.
Let $\mf _g$ denote the Milnor fibre of the function $g$ defining the image of $f$. Then,
\[
\chi (\mf _g) - \chi (\dis (f)) = 
\notemp _2 -\left( 
\frac{1}{2} \left( \mu (D^2) +3\mu (D^2|H) \right) 
+\frac{1}{3} \left( \mu (D^3) +6\mu (D^3|H_1) +5 \notemp _3 \right) +3Q
\right) .
\]
\end{theorem}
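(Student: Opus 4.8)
The plan is to specialize Theorem~\ref{dis-formula} to the six stable types $A_1, A_{1,1}, A_{1,1,1}, A_{1,1,1,1}, A_2, A_{1,2}$ listed above and then substitute the explicit data supplied by Propositions~\ref{prop:strata} and~\ref{prop:relstrata}. Theorem~\ref{dis-formula} already sorts the strata into those arising from mono-germs (carrying the factor $\widetilde{\chi}(\tmf_X)$) and those arising from multi-germs (carrying coefficient $-1$, since Lemma~\ref{sakamoto} gives $\chi(\tmf_X)=0$ and hence $\widetilde{\chi}(\tmf_X)=-1$ for multi-germs). So the first task is simply to decide which of the six strata are mono-germs.

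Among the six types only $S_1$ (an immersion) and $S_2$ (the Whitney cross-cap) are mono-germs; $S_{1,1}$, $S_{1,1,1}$, $S_{1,1,1,1}$ and $S_{1,2}$ each have more than one branch. For the top-dimensional stratum $S_1$ the Remark following Theorem~\ref{maineulerthm} gives $\widetilde{\chi}(\tmf_{S_1})=0$ (the function $g$ is reduced), so $S_1$ contributes nothing. For $S_2$ the transverse Milnor fibre is the Milnor fibre of the function defining the image of the Whitney cross-cap, which has the homotopy type of $S^2$, so $\widetilde{\chi}(\tmf_{S_2})=2-1=1$. Feeding this into Theorem~\ref{dis-formula} collapses the sum to
\[
\chi(\mf_g)-\chi(\dis(f)) = \chi\left(\cl{S_2},\cl{S_2}\backslash S_2\right) - \chi\left(\cl{S_{1,1}},\cl{S_{1,1}}\backslash S_{1,1}\right) - \chi\left(\cl{S_{1,1,1}},\cl{S_{1,1,1}}\backslash S_{1,1,1}\right) - \chi(S_{1,1,1,1}) - \chi(S_{1,2}),
\]
where the two zero-dimensional multi-germ strata $S_{1,1,1,1}$ and $S_{1,2}$ have empty boundary, so their relative term reduces to the ordinary Euler characteristic.

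Next I would eliminate the relative Euler characteristics of the one- and two-dimensional strata using Proposition~\ref{prop:relstrata}, turning everything into closures of strata, and then replace each closure by its Milnor-number expression from Proposition~\ref{prop:strata}. A convenient intermediate checkpoint is that, after substituting Proposition~\ref{prop:relstrata}, the terms $\chi(\cl{S_{1,1,1}})$ and $\chi(S_{1,1,1,1})$ cancel completely, leaving the clean identity
\[
\chi(\mf_g)-\chi(\dis(f)) = 2\chi\left(\cl{S_2}\right) - \chi\left(\cl{S_{1,1}}\right) - 2\chi(S_{1,2}).
\]
Inserting $\chi(\cl{S_2})=\notemp_2-\mu(D^2|H)$, the expression for $\chi(\cl{S_{1,1}})$, and $\chi(S_{1,2})=\mu(D^3|H_1)+\notemp_3$ and collecting coefficients then yields the stated formula.

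The work is essentially bookkeeping rather than conceptual, so there is no serious obstacle; the one place demanding care is the sign and $\notemp$-term accounting during the final collection, in particular checking that $\notemp_3$ accumulates to coefficient $-5/3$ (from $+\tfrac13$ in $\chi(\cl{S_{1,1}})$ together with the $-2$ coming from the two copies of $\chi(S_{1,2})$) and that $\mu(D^2|H)$ lands at $-3/2$. Getting these two coefficients right is what reproduces the $3\mu(D^2|H)$ and $5\notemp_3$ appearing inside the parenthesis of the claimed identity.
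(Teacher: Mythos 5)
Your proposal is correct and follows essentially the same route as the paper: specialize Theorem~\ref{dis-formula} to the stratification by stable type, note $\widetilde{\chi}(\tmf_{S_1})=0$, $\widetilde{\chi}(\tmf_{S_2})=1$ and coefficient $-1$ on the multi-germ strata, then substitute Propositions~\ref{prop:relstrata} and~\ref{prop:strata}. Your explicit bookkeeping, in particular the intermediate identity $2\chi\left(\cl{S_2}\right)-\chi\left(\cl{S_{1,1}}\right)-2\chi\left(S_{1,2}\right)$, is the internally consistent version of the paper's own display (which carries a slip: a coefficient $2$ on the $S_2$ term while simultaneously excluding $S_2$ from the subtracted sum) and it reproduces the stated formula exactly.
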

\begin{proof}
From Corollary~\ref{dis-formula} we know that the left-hand side of the equation is equal to 
\[
\sum_{X\in \SS } \left( \chi (\tmf _X)-1 \right) \chi \left( \cl{X}, \cl{X}\backslash X \right) .
\]
where $\SS $ is the stratification by stable type.
Since $\chi (\tmf _S )$ is zero for all strata $S$ except for $S_2$, when it is $2$, we have that
the left hand side is equal to

\[
2\chi \left( \overline{S_2}, \overline{S_2} \backslash S_2\right) 
-\sum_{S_X\in \SS {\text{ and }} S_X\neq S_2 }  \chi \left( \overline{S_X}, \overline{S_X} \backslash S_X\right)  .
\]
Using the descriptions for 
$ \chi \left( \overline{S_X}, \overline{S_X} \backslash S_X\right) $ in Proposition~\ref{prop:relstrata} we deduce the result.
\end{proof}

Since Proposition~\ref{prop:strata} allows us to express $\chi \left( \dis (f) \right)$, which is  equal to  $\chi (\overline{S_1})$, as a sum of Milnor numbers of multiple point spaces we deduce the following.
\begin{corollary}
\label{c3toc4cor}
Suppose that $f:(\C ^3,0)\to (\C ^4,0)$ is a corank $1$ germ with an isolated instability at $0$. Then,
\[
\chi (\mf _g) = 1+ \notemp _2 -\left( 
 \mu (D^2) +2\mu (D^2|H) ) 
+\frac{1}{2} \left( \mu (D^3) +5\mu (D^3|H_1) \right) +2\notemp _3 +4Q
\right) .
\]
\end{corollary}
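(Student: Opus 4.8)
The plan is to obtain $\chi (\mf _g)$ as the sum of two quantities that have already been assembled in the preceding results, by writing
\[
\chi (\mf _g) = \chi \left( \dis (f) \right) + \left( \chi (\mf _g) - \chi \left( \dis (f) \right) \right) .
\]
The second summand on the right is exactly the content of Theorem~\ref{c3toc4thm}. For the first summand I would use the identification $\dis (f) = \dis _1(f) = \overline{S_1}$ (already noted in the proof of Proposition~\ref{prop:strata}), so that $\chi \left( \dis (f) \right) = \chi \left( \overline{S_1} \right)$ is precisely the first displayed formula of that proposition. Thus the corollary reduces to substituting two known expressions into the identity above and simplifying.

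First I would substitute. Both $\chi \left( \overline{S_1} \right)$ and $\chi (\mf _g) - \chi \left( \dis (f) \right)$ are $\Q$-linear combinations of the Milnor numbers $\mu (D^2)$, $\mu (D^2|H)$, $\mu (D^3)$, $\mu (D^3|H_1)$, of the quadruple-point count $Q$, and of the indicator terms $\notemp _2$ and $\notemp _3$; hence all that remains is to collect like terms. The constant contributions give $1$ and the $\notemp _2$ contribution gives $\notemp _2$. For each singularity-dependent term one adds the coefficient coming from $\chi \left( \overline{S_1} \right)$ to the coefficient coming from the difference: the $\mu (D^2)$ terms yield coefficient $\tfrac12 + \tfrac12 = 1$, the $\mu (D^2|H)$ terms yield $\tfrac12 + \tfrac32 = 2$, the triple-point terms combine as $\tfrac16 + \tfrac13 = \tfrac12$ for $\mu (D^3)$ and $\tfrac12 + 2 = \tfrac52$ for $\mu (D^3|H_1)$ (so that they group into $\tfrac12(\mu (D^3) + 5\mu (D^3|H_1))$), the $\notemp _3$ terms give $\tfrac13 + \tfrac53 = 2$, and the $Q$ terms give $1 + 3 = 4$. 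Reading these coefficients off yields the claimed formula.

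The step I expect to demand the most care is this final bookkeeping of rational coefficients rather than any conceptual point: once Theorem~\ref{c3toc4thm} and Proposition~\ref{prop:strata} are granted, the corollary is a pure addition and the only way to go astray is arithmetic slippage. In particular I would pay attention to matching the $\tfrac16$-weighted triple-point contributions of $\chi \left( \overline{S_1} \right)$ against the $\tfrac13$-weighted contributions of the difference, and to the accumulation of the indicator $\notemp _3$, whose fractional coefficients $\tfrac13$ and $\tfrac53$ must combine to the integer $2$. As a final sanity check I would confirm that the resulting formula has integer coefficients $1$, $2$, $2$, $4$ on $\mu (D^2)$, $\mu (D^2|H)$, $\notemp _3$, $Q$ respectively and the single half-integer group $\tfrac12(\mu (D^3) + 5\mu (D^3|H_1))$, which is exactly the asserted expression.
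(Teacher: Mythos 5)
Your proposal is correct and is exactly the paper's own argument: the corollary is deduced by adding the formula for $\chi \left( \overline{S_1} \right) = \chi \left( \dis (f) \right)$ from Proposition~\ref{prop:strata} to the difference formula of Theorem~\ref{c3toc4thm}, and all of your coefficient computations (e.g.\ $\tfrac12 + \tfrac32 = 2$ on $\mu (D^2|H)$, $\tfrac16 + \tfrac13 = \tfrac12$ on $\mu (D^3)$, $\tfrac13 + \tfrac53 = 2$ on $\notemp _3$, $1+3=4$ on $Q$) check out against the paper's stated result.
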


\begin{remarks}
\begin{enumerate}
%\item The invariants on the right-hand side of the equation in this corollary are all upper semi-continuous (\cite{looijenga} p126) and hence the Euler characteristic of the Milnor fibre is lower semi-continuous. 

\item An interesting consequence of the equation is that $\mu (D^3)$ and $\mu (D^3|H_1)$ must have the same parity.

\item In \cite{hk} the Milnor numbers of the multiple point spaces were calculated for some families and for all  simple corank $1$ germs from $\C ^3$ to $\C ^4$. We can use these to calculate the Euler characteristic of the Milnor fibre for the image of the map. This is given in Table~\ref{table-for-mf}.
\end{enumerate}
\end{remarks}
\begin{table}
\caption{{\label{table-for-mf}}Euler characteristic of Milnor fibres}
\begin{tabular}{|l|c|c|c|}
\hline
Singularity & Name & $\mu _I$ & $-\chi (\mf )$ \\
\hline
$(x,y,z^2,z(z^2\pm x^2\pm y^{k+1}))$, $k\geq 1$ & $A_k$ & $k$& $3k-2$  \\
$(x,y,z^2,z(z^2+x^2y\pm y^{k-1}))$, $k\geq 4$ &$D_k$ &$k$ &$3k-2$  \\
$(x,y,z^2,z(z^2+x^3\pm y^4))$ & $E_6$ &$6$ & $16$ \\
$(x,y,z^2,z(z^2+x^3+xy^3))$ & $E_7$ &$7$ & $19$   \\
$(x,y,z^2,z(z^2+x^3+y^5))$ & $E_8$ &$8$ & $22$ \\
$(x,y,z^2,z(x^2\pm y^2\pm z^{2k}))$, $k\geq 2$ &$B_k$ &$k$&$2k-1$  \\
$(x,y,z^2,z(x^2+yz^2\pm y^k))$, $k\geq 3$ & $C_k$  &$k$ &$3k-3$  \\
$(x,y,z^2,z(x^2+y^3\pm z^4))$ & $F_4$& $4$ & $8$  \\
\hline
$(x,y,yz+z^4,xz+z^3)$ &$P_1$ & 1& 3 \\
$(x,y,yz+z^5,xz+z^3)$ &$P_2$ & 2& 7 \\
$(x,y,yz+z^6\pm z^{3k+2},xz+z^3)$, $k\geq 2$ &$P_3^k$ &$k+2$ &$3k+8$   \\
$(z,y,yz+z^7+z^8,xz+z^3)$ &$P_4^1$ &5 &18  \\
$(x,y,yz+z^7,xz+z^3)$ &$P_4$ &5 &18  \\
$(x,y,yz+z^{k+3},xz+z^3)$, $k\neq 3j$ &$P_k$ & $\frac{1}{6}(k+1)(k+2)$ & $\frac{1}{2}(k^2+5k)$  \\
%$(x,y,yz+z^4,xz+z^3)$ &$P_4$ & 1 & 0  \\
%$(x,y,yz+z^5,xz+z^3)$ &$P_5$ & 2 & 1  \\
%$(x,y,yz+z^k,xz+z^3)$ &$P_k$ & $\frac{1}{6} (k-2)(k-2)$&$\frac{1}{3} (k-2)(k-4)$
%&$\frac{1}{6} (5(k-2)(k-4)-3(k-4) )$& 0 \\
\hline
$(x,y,xz+yz^2,z^3\pm y^kz)$ &$Q_k$, $k\geq 2$ &$k$ &$3k$   \\
$(x,y,xz+z^3,yz^2+z^4+z^{2k-1})$, $k\geq 3$ &$R_k$ &$k+1$ &$2k+6$  \\
$(x,y,xz+y^2z^2\pm z^{3j+2},z^3\pm y^kz)$, $j\geq 1$, $k\geq 2$ & $S_{j,k}$ &$k+j+1$ &$3(k+j)+5$  \\
\hline
$(x,y,yz+xz^3\pm z^5+az^7,xz+z^4+bz^6)$ & I &6 &19  \\
$(x,y,yz+xz^3+az^6+z^7+bz^8+cz^9,xz+z^4)$ & II & 9& 30 \\
$(x,y,yz+z^5+z^6+az^7,xz+z^4)$ & III& 6 & 19 \\
$(x,y,yz+z^5+az^7,xz+z^4\pm z^6)$ &IV &6 &19  \\
$(x,y,xz+z^5+ay^3z^2+y^4z^2,z^3\pm y^2z)$ &V &6 &22  \\
$(x,y,xz+z^3,yz^2+z^5+z^6+az^7)$ &VI &6 &19  \\
$(x,y,xz+z^3,y^2z+xz^2+az^4\pm z^5)$ &VII & 6& 19 \\
$(x,y,xz+z^4+az^6+bz^7,yz^2+z^4+z^5)$ & VIII & 8& 24  \\
\hline
\end{tabular}
\end{table}

\section{An example in the equidimensional case}
\label{sec:equidim}

We shall now look at a very simple example of a family of maps from $\C ^n$ to $\C ^n$ which Terry Gaffney suggested to me as a useful source of examples.

Suppose now that $f:(\C ^n,0)\to (\C ^n,0)$ is a map of the form 
\[
f(x_1,\dots , x_{n-1},y) = \left( x_1,\dots , x_{n-1}, y^3+ \varphi (x_1,\dots , x_{n-1})y  \right)
\]
where $\varphi :(\C ^{n-1},0)\to (\C ,0)$  
defines an isolated hypersurface singularity at $0$. This latter condition implies that $f$ has an isolated instability at the origin.

Let $(X_1,\dots ,X_{n-1} ,Y)$ denote coordinates on the codomain of $f$. 
It is easy to show that discriminant of $f$ (i.e., the image of the critical set) is given as the zero-set of the map $H:(\C^n,0) \to (\C,0)$ defined by 
\[
H(X_1, \dots ,X_{n-1}, Y)= 4 \left( \varphi (X_1,\dots ,X_{n-1} ,Y) \right) ^3 +27Y^2 .
\]
By Theorem~1 of \cite{sakamoto} the Milnor fibre of $H$, denoted $\mf _H$, is homotopically equivalent to the suspension of the Milnor fibre of $\varphi ^3$. Hence, as $\chi \left( {\text{Susp}} (A) \right) = 2 - \chi (A) $, we have 
\begin{eqnarray*}
\chi \left( \mf _H \right) &=& 2 - \chi \left( \mf _{\varphi ^3} \right) \\ 
&=& 2 - 3 \chi \left( \mf _{\varphi } \right) \\ 
&=& 2 - 3 \left( 1+ (-1)^{n-2} \mu (\varphi ) \right) \\ 
&=& -1 + 3  (-1)^{n-1} \mu (\varphi )  . 
\end{eqnarray*}
We shall now verify the formula in Corollary~\ref{dis-formula} with this.

We can unfold $f$ with $F:(\C ^{n+1},0)\to (\C ^{n+1},0)$ given by 
\[
F(s, x_1,\dots , x_{n-1},y) = \left( s, x_1,\dots , x_{n-1}, y^3+ \left( \varphi (x_1,\dots , x_{n-1})+s\right) y  \right) 
\]
and we can define $f_s:B_\epsilon (0)\to \C ^n$ by 
\[
f_s(x)=\left( x_1,\dots , x_{n-1}, y^3+ \left( \varphi (x_1,\dots , x_{n-1})+s\right) y  \right).
\]

The critical set $\Sigma f_s$ is given by $3y^2+\varphi (x_1,\dots ,x_{n-1})+s$ and so, for small $s\neq 0$, $\Sigma f_s$ is the Milnor fibre of $\Sigma f_0$. Therefore, by Theorem~1 of \cite{sakamoto} it is homotopically equivalent to the suspension of the Milnor fibre of $\varphi $.
Also, $f_s$ is stable - its only singularities are cusps - so the disentanglement of $f$ is $\dis (f)=f_s\left( \Sigma f_s\right) $. Since the map $f_s|\Sigma f_s$ is injective the disentanglement is therefore homotopically equivalent to a bouquet of $\mu (\varphi )$ spheres of real dimension $n-1$.

In the quasi-Milnor fibration given by the disentanglement map we see that the disentanglement has two strata - the non-singular set and a cuspidal edge. This cuspidal edge, denoted $\CC $ is given as the zero-set of the map $(X_1,\dots ,X_{n-1},Y)\mapsto \left( \varphi (X_1,\dots ,X_{n-1})   ,Y\right) $. Thus, it is homotopically equivalent to the Milnor fibre of $\varphi $. 
Along the singular set, denoted $\CC $, the transverse Milnor fibre is the Milnor fibre of a cusp (i.e., the curve $x^3=y^2$), hence $\mu (\tmf _\CC )=2$.

In this quasi-Milnor fibration, the special fibre is the disentanglement of $f$ so $\chi (\dis(f)) = 1+(-1)^{n-1} \mu _(\varphi )$ and the general fibre is the Milnor fibre of the function defining the discriminant of $f$, i.e., the Milnor fibre of $H$.

From Corollary~\ref{dis-formula} we have
\begin{eqnarray*}
\chi (\mf _H)=\chi (\gf ) &=& \chi (\sf ) + \left( \chi (\tmf _\CC )-1  \right) \chi (\CC ,\emptyset ) \\
&=& \chi \left(\dis (f) \right) + \left( 1-2 -1 \right) \chi (\CC ) \\
&=& 1+ (-1)^{n-1} \mu \left( \varphi \right) -2 \left( 1+ (-1)^{n-2} \mu (\varphi ) \right) \\  
&=& -1 + 3 (-1)^{n-1} \mu (\varphi ) .
\end{eqnarray*}
Hence, the two calculations of the Euler characteristic of the Milnor fibre of the function defining the discriminant of $f$ coincide.

\section{Applications to equisingularity}
\label{sec:equising}
In this section we come to the main application of the Euler characteristic result. For this we need some preliminaries on equisingularity of mappings.
The idea is that we unfold a map $f:(\C ^n,0)\to (\C ^p,0)$ by a one-parameter unfolding and attempt to Whitney stratify this unfolding so that the parameter axis in the source and target are strata.
Most of the following can be done for multi-germs but we shall restrict to mono-germs for ease of exposition.

Throughout this section we shall assume that $f:(\C ^n , 0 ) \to (\C ^{n+1} ,0)$ is corank $1$ map with an isolated instability at the origin in $(\C ^{n+1},0)$.

Let $F:U\to W $ be a one-parameter unfolding of $f$. That is, $U\subseteq \C ^n \times \C $ and $W\subseteq \C ^{n+1}\times \C$ are open sets 
and $F(x,t)=(\overline{F}(x,t),t)$ so that $\overline{F}(x,t)|\left( U\cap \left( \C^n \times \{ 0 \} \right) \right) $ is a representative of $f$.

We say that $F$ is {\defn{origin-preserving}} if $\overline{F}(0,t)=0$ for all $t\in \C $.
We shall assume throughout that $F$ is origin-preserving.

Let $U_t=U\cap \left( \C ^n \times \{ t \}\right)$ and $W_t$ be defined similarly. Define $f'_t : U_t \to W_t$ by $f'_t(x)=F(x,t)=(\overline{F}(x,t),t)$ and let $f_t:(\C ^n , 0 ) \to (\C ^{n+1} ,0)$ be the germ at $0$ defined by $f'$.

Note that $F^{-1}(0,t)=(f'_t)^{-1}(0)$ contains $0\in \C ^n$ as $F$ is origin-preserving but it may in fact contain other points. This will be crucial in part (ii) of Definition~\ref{defn:excellent}.

\begin{definition}
Let $S=U\cap \left( \{ 0 \} \times \C \right )$ and $T=W\cap \left( \{ 0 \} \times \C \right )$ be the {\defn{parameter axes}} in source and target respectively.
\end{definition}

The aim is to stratify $F$ so that the parameter axes are strata, or more accurately, some neighbourhood of $0$ in these axes are strata.

\begin{definition}
We say that {\defn{$F$ is Whitney equisingular}} if there is a stratification of $F$ so that $F$ is a Thom $A_F$ map and $S$ and $T$ are Whitney strata.

We say that {\defn{the image of $F$ is Whitney equisingular}} if the image of $F$ can be given a Whitney stratification such that $T$ is a stratum.  
\end{definition}

Gaffney identified that one of the significant problems with stratifying the unfolding is that one-dimensional strata other than $S$ and $T$ may appear and these should be avoided. He proposed in \cite{polar} two definitions to identify unfoldings without these special strata:
% Excellent unfoldings
\begin{definition}
\label{defn:excellent}
Suppose that $f:(\C ^n , 0 ) \to (\C ^{n+1} ,0)$ is corank $1$ and has an isolated instability at the origin in $(\C ^{n+1},0)$.
Suppose that $F$ is an origin-preserving one-parameter unfolding with a representative $F:U\to W$ which is proper and finite-to-one, and $F^{-1}(0) \cap U = \{  0 \} $. 
 
We call $F$ a {\em{good unfolding}} if all the following hold.
\begin{enumerate}
\item $F^{-1}(W)=U$.
\item $F(U \backslash S )= W\backslash T $. 
\item The {\defn{locus of instability}}, the set of points in $W$ such that $F|U_t$ is not-stable, is equal to $T$.
\end{enumerate}
We call $F$ an {\em{excellent unfolding}} if, in addition to the above, the following holds.
\begin{enumerate}
\setcounter{enumi}{3}
\item The $0$-stables are constant in the family. That is, there does not exist a curve of points in $W$ such that $F|U_t$ has a stable type with a zero-dimensional stratum. 
\end{enumerate}
\end{definition}

Now we need to look for invariants of $f_t$, the elements of the family, that will produce equisingularity. 
% From Cn to Cn+1 paper:
It is well-known that if $(X,x)$ is a complete intersection complex analytic set, then the complex link of $x$
is a wedge of spheres of real dimension $\dim _\C X -1$. 

Let $g:(\C ^{N+1},0)\to (\C ,0)$ be a complex analytic function.
If $H^i$ is a plane of dimension $i$ through the origin in $(\C ^{N+1},0)$, then in general $V(g)\cap H^i$ is a complete
intersection.
\begin{definition}[Cf.\ \cite{damon-highmult}]
The {\em{$k$th higher multiplicity}} is the number 
\[
\mu ^k(g) = \dim _\C H _{k-1}(\LL ^k; \C)
\]
where $\LL ^k$ is the complex link of $V(g)\cap H^{k+1}$ at $0$ and $1\leq k \leq N$.
\end{definition}
For sufficiently general $H^k$ this is a well-defined invariant of $V(g)$.

Now consider the family $f_t:(\C ^n ,0)\to (\C ^{n+1} ,0)$. The image of $f_t$ is a hypersurface and so we can define $g_t:(\C ^{n+1},0) \to (\C ,0)$ to be the reduced defining equation for this hypersurface.

We now state Corollary~6.13 from \cite{cntocn+1} which is in some sense a generalization of the classical Brian\c{c}on--Speder--Teissier result on the equisingularity of families of isolated hypersurface singularities.
% Main Them from Cn to Cn+1
\begin{proposition}
\label{equi-equiv-result}
Suppose that $f:(\C ^n,0)\to (\C ^{n+1},0)$ is a corank $1$ mono-germ with an isolated instability at $0$ and that $F$ is an excellent one-parameter unfolding of $f$ that is stratified by stable type outside of $S$ and $T$.

Then, the image of $F$ is Whitney equisingular along the parameter axis $T$ if and only if
the sequence $\left( \mu  ^1(g_t), \dots , \mu  ^{n}(g_t), \widetilde{\chi } (\mf (g_t)) \right) $ is constant in the family.
\end{proposition}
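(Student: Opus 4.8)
The plan is to prove equivalence via a Whitney equisingularity criterion (the multiplicity-type characterization) applied to the image hypersurface, reducing the geometric statement to constancy of numerical invariants. The key link is that the higher multiplicities $\mu^k(g_t)$ together with the top invariant $\widetilde{\chi}(\mf(g_t))$ should encode precisely the polar multiplicities of the image at each point of $T$, and constancy of the polar multiplicities along a stratum is a standard criterion for Whitney conditions (L\^e--Teissier). First I would set up the stratification: since $F$ is an excellent unfolding stratified by stable type off $S$ and $T$, the only candidate strata meeting $T$ are $T$ itself and the big strata whose closures contain $T$; the excellence condition (iv) guarantees no spurious one-dimensional strata appear, so the stratification extends to $T$ as a stratum exactly when the Whitney conditions hold between $T$ and each adjacent stratum.

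The forward direction (Whitney equisingular $\Rightarrow$ constancy) I expect to be the easier half. If the image of $F$ is Whitney equisingular along $T$, then the topological type of the local hypersurface $(V(g_t),0)$ is constant in the family, as is the topological type of its generic linear sections. Since each $\mu^k(g_t)$ is defined through the Betti numbers of the complex link of $V(g_t)\cap H^{k+1}$, and the complex link is a topological invariant of the stratified set, constancy of the stratified topological type forces each $\mu^k(g_t)$ to be constant; similarly $\widetilde{\chi}(\mf(g_t))$, being the reduced Euler characteristic of the Milnor fibre, is determined by the local topological type and hence constant.

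The converse (constancy $\Rightarrow$ Whitney equisingular) is where I expect the main obstacle, and it is here that Theorem~\ref{maineulerthm} and its corollaries do the real work. The strategy is to translate the sequence $\left(\mu^1(g_t),\dots,\mu^n(g_t),\widetilde{\chi}(\mf(g_t))\right)$ into the full list of polar multiplicities (or equivalently the Euler characteristics $\chi(\cl{X},\cl{X}\backslash X)$ of all strata in the disentanglement, viewed via Corollary~\ref{linsum} as linear combinations of the Milnor numbers $\mu(D^k(f_t,\PP))$). The higher multiplicities $\mu^k(g_t)$ control the generic transverse sections, while $\widetilde{\chi}(\mf(g_t))$ supplies the one remaining top-dimensional datum; Corollary~\ref{c3toc4cor} (and its general-dimension analogue) shows that $\widetilde{\chi}(\mf(g_t))$ is governed by exactly these multiple-point Milnor numbers. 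The delicate point is to verify that the $n+1$ numbers in the sequence are in bijective (invertible) correspondence with the invariants needed to apply the Whitney criterion, with no hidden loss of information; once this invertibility is established, constancy of the sequence yields constancy of all polar multiplicities of the image along $T$, and the L\^e--Teissier criterion gives the Whitney conditions between $T$ and every adjacent stratum. Because the unfolding is excellent and corank $1$, the absence of zero-dimensional stable types moving in the family removes the exceptional strata that would otherwise obstruct the criterion, and the Thom $A_F$ / stable-type stratification closes up to make $T$ a Whitney stratum.
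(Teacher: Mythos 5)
You should know at the outset that the paper contains no proof of this proposition to compare against: it is imported verbatim as Corollary~6.13 of \cite{cntocn+1}, so your argument has to stand entirely on its own, and as written it does not. In the forward direction you deduce constancy of the $\mu^k(g_t)$ from constancy of the stratified topological type, on the grounds that ``the complex link is a topological invariant.'' That step is unsound. The numbers $\mu^k(g_t)$ are computed from complex links of \emph{generic plane sections} $V(g_t)\cap H^{k+1}$, which are not strata of the image; in particular $\mu^1(g_t)$ is essentially the multiplicity of the image hypersurface at the origin, and whether multiplicity is an invariant of the local topological type is precisely Zariski's open multiplicity problem. Your reasoning, if valid, would settle questions that are not known. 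What actually makes the forward implication work is the Hironaka--Teissier circle of results: Whitney conditions along $T$ imply equimultiplicity, and Whitney conditions are inherited by generic plane sections, from which constancy of the higher multiplicities and of $\widetilde{\chi}(\mf(g_t))$ follows. None of this machinery appears in your sketch.

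In the converse direction, the step you yourself flag as ``the delicate point'' --- that the $n+1$ numbers $\left(\mu^1(g_t),\dots,\mu^n(g_t),\widetilde{\chi}(\mf(g_t))\right)$ correspond invertibly to the polar-multiplicity data required by a L\^e--Teissier type criterion for \emph{every} pair of strata adjacent to $T$, not merely data at the origin --- is the entire content of the proposition, and you assert it rather than prove it. The auxiliary results you lean on cannot close this gap: Corollary~\ref{linsum} expresses Euler characteristics of closures of strata of the \emph{disentanglement} in terms of Milnor numbers of multiple point spaces of $f_t$, which is a different system of invariants from the $\mu^k(g_t)$ (connecting the two systems is what Theorem~\ref{mainequithm} accomplishes, and only in one direction, using the present proposition as input); and Corollary~\ref{c3toc4cor} holds only for $n=3$, so it cannot be cited in a general-$n$ argument. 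Indeed the paper's concluding remarks point out that the missing ingredient for reversing the implication in general --- the Euler characteristic of the Milnor fibre of the image of a stable corank $1$ map --- is an open problem. So both implications in your proposal have genuine gaps, and the first rests on a principle that is false or unknown in the generality you invoke it.
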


This statement is not very aesthetically pleasing. It would be improved if we could replace the mix of invariants - the higher multiplicities and the Euler characteristic - by a more consistently defined sequence. In the rest of this section we shall make this improvement for the `if' part of the statement, and in the $n=3$, $p=4$ case do so completely. Furthermore, in both situations, we shall do this for good unfoldings rather than the stronger notion of excellent.

First we shall describe our consistently defined set of invariants. 
% Need stuff on $\mu _I ^k$.
Let $f:(\C ^n,0)\to (\C ^{n+1},0)$ be a corank $1$ map-germ with an isolated instability and let $\dis (f)$ be the disentanglement of $f$.
For an open and dense set $Z$ in the space of $i$-planes through the origin in
$\C ^{n+1} $, the set $\dis (f)\cap H$ is homotopically equivalent to a wedge of $(i-1)$-spheres
for $H\in Z$ and the number of spheres is independent of $H$ and the choice of 
disentanglement map. See Corollary 4.2 and Lemma 7.8 of \cite{damon-highmult}.

This leads to the following definition.
\begin{definition}
\label{good-image-milnor-defn}
The {\em{$k$th image Milnor number of $f$}}, denoted $\mu _I^k(f), $ is equal to $b_{k-1} (\dis (f) \cap H^k)$, where  $H^k$ is a generic $k$-plane through the origin in $\C ^{n+1}$. 

The {\em{$\mu _I^*$-sequence for the map $f$}} is the set $\mu _I^k (f)$ for $k=0,\dots , n+1$.
\end{definition}
Note that $\mu _I^{n+1}(f)=\mu _I (f)$,  i.e., it is the usual image Milnor number from \cite{vancyc}.

\begin{remark}
Unfortunately, definitions from my previous papers have rather ignominiously collided with each other. The definition given in \cite{cntocn+1} for the $k$th image Milnor number is that given above and is also denoted $\mu _I^k(f)$.

However, in \cite{calcgen} a totally different object was given the name the $k$th image Milnor number. This object is the number of spheres in the homotopy type of the $k$th disentanglement which we used in Section~\ref{sec:3to4} on maps from $3$-space to $4$-space. 
Fortunately, here and in \cite{calcgen} it was denoted $\mu _{I_k}$, which is sufficiently different to $\mu _I ^k$.

It is important to note that these concepts are different and therefore, in general,  $\mu _{I_k}(f)\neq \mu _I ^k(f)$ except for $\mu _{I_1}(f)=\mu _I^{n+1}(f)=\mu _I(f)$.

To remedy this collision of notation I shall in future papers use the definition in Definition~\ref{good-image-milnor-defn}.
\end{remark}

We need the next lemma to force the constancy, in the family, of the Milnor numbers of the multiple point spaces.
\begin{lemma}
\label{lem:mui-const}
Suppose that $f:(\C ^n,0)\to (\C ^{n+1},0)$ is corank $1$ map-germ with an isolated instability and $F$ is an excellent unfolding.   Then
$\mu _I(f_t)$ is constant if and only if $\mu \left( D^k(f_t,\PP )\right)$ is constant for all $D^k(\widetilde{f_t},\PP )$ that are non-empty.  
\end{lemma}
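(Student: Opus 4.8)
The plan is to prove the two implications separately, the forward one resting on upper semicontinuity of the Milnor number together with a positivity property of the coefficients produced by Theorem~\ref{firstlinsum}. Before either direction, I would record the structural fact, guaranteed by excellence of $F$, that the collection of nonempty multiple point spaces $D^k(f_t,\PP )$ is independent of $t$; equivalently the indicators $\notemp _k$ are constant in the family. This holds because an excellent unfolding preserves the stable types that occur (no zero-dimensional stratum sweeps out a curve), so the only data that can vary with $t$ are the Milnor numbers $\mu (D^k(f_t,\PP ))$ themselves.

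For the implication ($\Leftarrow $) I would apply Theorem~\ref{firstlinsum} to the top stratum $S_1$, whose closure is all of $\dis (f_t)$. Since $\dis (f_t)$ is a wedge of $\mu _I(f_t)$ spheres of real dimension $n$, we have $\chi (\dis (f_t))=1+(-1)^n\mu _I(f_t)$, while Theorem~\ref{firstlinsum} expresses $\chi (\cl{S_1})$ as a $\Q $-linear combination of the $\mu (D^k(f_t,\PP ))$ together with terms depending only on the $\notemp _k$. The combinatorial coefficients depend only on which stable types occur, hence are constant along the family. Thus $\mu _I(f_t)$ is a $t$-independent linear combination of the $\mu (D^k(f_t,\PP ))$, and constancy of all the latter forces constancy of the former.

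For the implication ($\Rightarrow $) write this relation as $\mu _I(f_t)=\sum _{(k,\PP )}a_{k,\PP }\,\mu (D^k(f_t,\PP ))+c$, where $c$ is built from the constant $\notemp _k$ and the $a_{k,\PP }$ are fixed rationals. The key claim is that $a_{k,\PP }>0$ for every nonempty $D^k(f_t,\PP )$. Granting this, note that $t=0$ is the most degenerate member of the unfolding, so upper semicontinuity of the Milnor number for the family of isolated complete intersection singularities $D^k(f_t,\PP )$ gives $\mu (D^k(f_t,\PP ))\le \mu (D^k(f_0,\PP ))$ for all small $t$, with each difference nonnegative. If $\mu _I(f_t)$ is constant, then comparing the relation at generic $t\ne 0$ with that at $t=0$ yields $\sum _{(k,\PP )}a_{k,\PP }\bigl(\mu (D^k(f_0,\PP ))-\mu (D^k(f_t,\PP ))\bigr)=0$, a sum of nonnegative terms with strictly positive weights; hence every summand vanishes and each $\mu (D^k(f_t,\PP ))$ is constant in the family.

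The main obstacle is therefore the positivity of the coefficients $a_{k,\PP }$. I would establish this from the explicit alternating-homology Euler characteristic computations underlying Theorem~\ref{firstlinsum} (Section~2.2 of \cite{hk} and the formulae of \cite{calcgen}), showing that each nonempty multiple point space contributes a strictly positive multiple of its Milnor number to $\mu _I$. This is exactly what happens in the $\C ^3\to \C ^4$ case, where Proposition~\ref{prop:strata} presents $\mu _I$ as the manifestly positive combination of $\mu (D^2)$, $\mu (D^2|H)$, $\mu (D^3)$, $\mu (D^3|H_1)$ and $Q=\mu (D^4)+\notemp _4$. Should global positivity prove awkward in higher dimensions, an alternative is an induction on the multiplicity $k$ from the top: the deepest space is isolated by a lowest-dimensional stratum carrying a positive coefficient, and once its $\mu $-constancy is forced by semicontinuity one descends through the adjacency hierarchy, using at each level that the deeper contributions have already been pinned down.
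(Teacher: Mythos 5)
Your strategy --- write $\mu _I(f_t)$ as a fixed rational linear combination of the $\mu \left( D^k(f_t,\PP )\right)$ and then invoke upper semicontinuity of ICIS Milnor numbers --- is quite different from the paper's proof, which is essentially a citation: the number of points of $F^{-1}(0,t)$ is constant by condition (ii) of Definition~\ref{defn:excellent}, and the equivalence is then Corollary~4.7 of \cite{excellent}. In effect you are attempting to reprove that external result, and your attempt has a genuine gap at exactly the point you flag yourself: the positivity of the coefficients $a_{k,\PP }$. Theorem~\ref{firstlinsum} asserts only the existence of \emph{possibly zero} rational coefficients for a single fixed map; it gives neither positivity nor nonvanishing, and a single vanishing coefficient on a nonempty $D^k(\widetilde{f_t},\PP )$ already destroys the forward implication, since constancy of the weighted sum then says nothing about that space. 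Nor do you get from the paper that the coefficients are the same for every member of the family: Theorem~\ref{firstlinsum} is proved by a double induction for one map, and its output is never shown to be canonical or deformation-invariant. So the substantive half of the lemma rests on a claim for which you offer a plan (an alternating-homology character computation, or an induction down the adjacency hierarchy) rather than a proof. The claim is in fact true --- a Lefschetz fixed-point/character argument shows the sign of $\sigma $ cancels the sign coming from the dimension of the fixed-point space, which is essentially what the cited Corollary~4.7 of \cite{excellent} encodes --- but none of that is in your proposal.

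There are two further problems. First, your opening structural claim, that excellence makes the set of nonempty $D^k(\widetilde{f_t},\PP )$ (equivalently the $\notemp _k$) independent of $t$ ``because an excellent unfolding preserves the stable types that occur,'' is not justified by the definition: condition (iv) of Definition~\ref{defn:excellent} concerns only \emph{zero-dimensional} stable types. The paper itself only has upper semicontinuity of $\notemp _k$ (via Theorem~4.3 of \cite{excellent}), and even in the $\C ^3\to \C ^4$ converse it must argue separately that $\notemp _2$ cannot jump; constancy of the nonemptiness pattern is not free. Second, your semicontinuity comparison treats each $D^k(f_t,\PP )$ as a single germ at the origin, silently ignoring that $(f'_t)^{-1}(0)$ may contain points besides $0$ (the paper stresses this after defining $f'_t$), so for $t\neq 0$ the relevant multiple point spaces may be singular at several points and the naive inequality $\mu \left( D^k(f_t,\PP )\right) \le \mu \left( D^k(f_0,\PP )\right)$ needs the multi-point accounting. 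Controlling exactly this is why the paper's proof begins by noting that the number of points in $F^{-1}(0,t)$ is constant by condition (ii).
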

\begin{proof}
The number of points in $F^{-1}(0,t)$ is constant by condition (ii) of excellent unfolding. The result then follows from Corollary~4.7 of \cite{excellent} (where $F^{-1}(0,t)$ is denoted $s(f_t)$). 
\end{proof}

Our main equisingularity result is the following.
\begin{theorem}
\label{mainequithm}
Suppose that $f:(\C ^n,0)\to (\C ^{n+1},0)$ is a corank $1$ mono-germ with an isolated instability at $0$ and that $F$ is a good one-parameter unfolding of $f$ that is stratified by stable type outside of $S$ and $T$.

If the sequence $\left( \mu  _I^1(f_t), \dots , \mu  _I^{n+1}(f_t) \right) $ is constant in the family,
then the image of $F$ is Whitney equisingular along the parameter axis $T$.
\end{theorem}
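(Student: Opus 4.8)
The plan is to deduce the theorem from the \emph{only} equisingularity criterion available to us, the `if' direction of Proposition~\ref{equi-equiv-result}. Two gaps separate our hypotheses from that statement: Proposition~\ref{equi-equiv-result} is phrased for \emph{excellent} unfoldings whereas we assume only \emph{good} ones, and it is stated in terms of the mixed sequence $\left( \mu^1(g_t),\dots,\mu^n(g_t),\widetilde{\chi}(\mf(g_t)) \right)$ rather than the $\mu_I^*$-sequence. Accordingly the argument splits into two tasks: (a) promoting a good unfolding with constant $\mu_I^*$-sequence to an excellent one, and (b) showing that constancy of the $\mu_I^*$-sequence forces constancy of the mixed sequence. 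The common engine for both is that every invariant in sight is a universal $\Q$-combination of the Milnor numbers $\mu\left(D^k(f_t,\PP)\right)$ of the multiple point spaces together with the emptiness indicators $\notemp_k$ (this is the content of Theorem~\ref{firstlinsum}, Corollary~\ref{linsum}, and Corollary~\ref{dis-formula}). So the genuine task is to control these multiple point numbers.

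First I would show that constancy of the full $\mu_I^*$-sequence forces each $\mu\left(D^k(f_t,\PP)\right)$ to be constant. The tool is slicing: for a generic $k$-plane $H^k$ through the origin in the target, the section $f_t^{H^k}:(\C^{k-1},0)\to(\C^{k},0)$ is again corank $1$ with isolated instability, its disentanglement is $\dis(f_t)\cap H^k$, and its multiple point spaces are generic linear sections of the $D^j(f_t,\PP)$. Since slicing commutes with disentanglement for generic planes, $\mu_I\left(f_t^{H^k}\right)=\mu_I^k(f_t)$, so each entry of the $\mu_I^*$-sequence is the top image Milnor number of a section. A good unfolding satisfies $\# F^{-1}(0,t)=1$ by condition (ii) of Definition~\ref{defn:excellent} (that condition forces $F^{-1}(T)=S$), which is precisely the hypothesis on the number of preimages used in the proof of Lemma~\ref{lem:mui-const}; its only appeal to excellence is through this constancy, so the argument applies here. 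Running it over the flag of generic sections and invoking the upper semicontinuity of the Milnor numbers of the ICIS $D^k(f_t,\PP)$ in the family, a Teissier-style triangular induction then pins each $\mu\left(D^k(f_t,\PP)\right)$ to be constant, while simultaneously forcing the indicators $\notemp_k$ to be constant.

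With all the $\mu\left(D^k(f_t,\PP)\right)$ and $\notemp_k$ constant, the two remaining tasks become formal. For excellence, constancy of these numbers fixes the count of every zero-dimensional stable configuration in the family (in the $n=3$, $p=4$ case these are exactly the quantities $Q$ and $\chi(\overline{S_{1,2}})$ of Proposition~\ref{prop:strata}), which forbids the birth of a curve of zero-dimensional stable types off $T$ and hence upgrades the good unfolding to an excellent one, establishing condition (iv) of Definition~\ref{defn:excellent}. For the mixed sequence, Corollary~\ref{dis-formula} combined with Corollary~\ref{linsum} writes $\chi(\mf_{g_t})$, and therefore $\widetilde{\chi}(\mf(g_t))$, as a fixed $\Q$-combination of the $\mu\left(D^k(f_t,\PP)\right)$ and $\notemp_k$, hence constant; applying the same machinery to each section $f_t^{H^k}$ identifies the higher multiplicity $\mu^k(g_t)$, which up to sign is the reduced Euler characteristic of the complex link $\LL^k$ computed from the sliced disentanglement, with another such fixed combination, and so it too is constant. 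Feeding the now-excellent unfolding and the constant mixed sequence into Proposition~\ref{equi-equiv-result} yields Whitney equisingularity of the image of $F$ along $T$.

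The main obstacle is the descent in the second paragraph: extracting constancy of every individual $\mu\left(D^k(f_t,\PP)\right)$ from constancy of the $\mu_I^*$-sequence alone. The coefficients relating the two families of invariants are only rational, and possibly zero or negative (Corollary~\ref{linsum}), so one cannot conclude term-by-term constancy from a constant sum; the argument must be organised as an induction over the flag of generic sections, leaning on semicontinuity at each level. The most delicate bookkeeping concerns the emptiness indicators $\notemp_k$, since these are exactly the terms that can jump discontinuously as a multiple point space is born, and ensuring they stay constant is what makes the triangular induction close. Once the multiple point numbers are controlled, the promotion to excellence, the constancy of the mixed sequence, and the appeal to Proposition~\ref{equi-equiv-result} are comparatively routine.
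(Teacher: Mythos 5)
Your overall skeleton matches the paper's: promote the good unfolding to an excellent one, convert constancy of the $\mu_I^*$-sequence into constancy of the mixed sequence of Proposition~\ref{equi-equiv-result}, and then quote that proposition. But there is a genuine gap at the centre of your argument, namely the claim that Theorem~\ref{dis-formula} (with Corollary~\ref{linsum}) writes $\chi (\mf _{g_t})$ as a \emph{fixed} $\Q $-combination of the numbers $\mu \left( D^k(f_t,\PP )\right)$ and $\notemp _k$. It does not: the right-hand side of Theorem~\ref{dis-formula} carries the coefficients $\widetilde{\chi }\left( \tmf _X \right)$ on the mono-germ strata, and these are invariants of the stable types occurring, not combinations of multiple point Milnor numbers --- for a stable germ all the $\mu \left( D^k(\,\cdot\,,\PP )\right)$ vanish while its transverse Milnor fibre is nontrivial. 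Indeed the paper's concluding remarks stress that these Euler characteristics are unknown in general, which is exactly why the converse of the theorem resists proof for all $n$. What saves the forward direction is not that these coefficients are known or universal, but that they are \emph{constant in $t$}: since $F$ is stratified by stable type outside $S$ and $T$, every positive-dimensional stratum $X_t$ of the image of $f_t$ is the slice of a stratum $X$ of $F$, so its stable type, and hence $\widetilde{\chi }\left( \tmf _{X_t}\right)$, does not depend on $t$; the zero-dimensional strata are matched across the family through the zero-dimensional multiple point spaces. This observation is the step your proof is missing, and without it the constancy of $\chi (\mf (g_t))$ does not follow from your premises.

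Two lesser points. First, your slicing/``triangular induction'' is both unnecessary and under-justified: Lemma~\ref{lem:mui-const} applied once, to the top entry $\mu _I^{n+1}(f_t)=\mu _I(f_t)$, already gives constancy of all the $\mu \left( D^k(f_t,\PP )\right)$ (you correctly note its proof needs only the preimage-count constancy supplied by condition (ii) of a good unfolding); by contrast, applying it to the sliced families $f_t^{H^k}$ requires those to be good unfoldings, which you never verify, and constancy of the Milnor numbers does not by itself force the indicators $\notemp _k$ to be constant, since a multiple point space with $\mu =0$ could be born at $t=0$ --- in the paper this, too, is handled structurally via the stratification of $F$. Second, your routes to excellence and to the constancy of $\mu ^k(g_t)$ are more roundabout than the paper's: it obtains condition (iv) of Definition~\ref{defn:excellent} directly from goodness (citing Corollary~5.8 of the preprint on stratification of unfoldings), and it gets constancy of $\mu ^k(g_t)$ immediately from the identification $\mu ^k(g_t)=\mu _I^k(f_t)$ (a generalization of Damon's work) together with the hypothesis, rather than through any computation with sliced multiple point spaces; your conservation-of-number argument for excellence is plausible in spirit but is asserted rather than proved.
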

\begin{proof}
As $F$ is good the number of points in $F^{-1}(0,t)$ is constant by condition (ii) of good unfolding. Hence, condition (iv) of excellent unfolding holds from Corollary~5.8 of \cite{excellent} and so $F$ is excellent.

Since $F$ is excellent the strata outside $T$ are two-dimensional. (Since we are only interested in strata close to $T$ we can assume that $U$ is chosen small enough to exclude any zero-dimensional strata.) We can Whitney stratify $f_t$ by taking the intersection of strata of $F$ with $\C ^n \times \{ t \} $, see \cite{polar} section~6, in particular Proposition~6.4. See also Section~6 of \cite{cntocn+1}. (The origins in $U_t$ and $W_t$ will automatically be Whitney strata.)

Since the stratification of $F$ (outside $S$ and $T$) is by stable type the stratification of the image of $f_t$ is by stable type and this is the canonical stratification. 
The strata in the disentanglement of $f_t$ correspond to these strata with the addition of zero-dimensional strata. Let $X$ be a stratum of $F$ and $X_t$ be the intersection with $\C ^n \times \{ t \} $. The transverse Milnor fibre for a non-zero-dimensional stratum $X_t$ is determined by the stratum $X$ in the stratification of $F$ and hence for each stratum we can consider the transverse Milnor fibre constant along $T$.

By Lemma~\ref{lem:mui-const}, the constancy of $\mu _I(f_t)$ implies that all the Milnor numbers of the multiple point spaces and their restriction to reflecting hyperplanes are constant. Corollary~\ref{linsum} thus shows that  $\chi \left( \cl{X_t}, \cl{X_t}\backslash X_t \right)$ is constant in the family.
For $0$-dimensional strata $X_t$ we have $\chi \left( \cl{X_t}, \emptyset \right)$ constant as these strata correspond to some zero-dimensional multiple point space.

Now, $\chi \left( \dis(f_t) \right) $ is constant along $T$ as it equals $1+(-1)^n\mu _I ^{n+1}(f_t)$ and therefore, by Theorem~\ref{dis-formula}, the Euler characteristic of the Milnor fibre of $g_t$ is constant. As $g_t$ is reduced with a singular set of codimension $2$ in the ambient space the Milnor fibre is connected by \cite{km} and so the reduced Euler characteristic of the Milnor fibre of $g_t$ is constant in the family.

Turning to the other terms in the sequence in Proposition~\ref{equi-equiv-result}, we have that, by a generalization of Section~4 of \cite{damon-highmult},  $\mu ^i(g_t) = \mu _I ^i(f_t)$ for all $i\leq n$. Hence by assumption $\mu ^i(g_t)$ is constant for all $1\leq i \leq n$.

Thus, by Proposition~\ref{equi-equiv-result}, the image of $F$ is Whitney equisingular along $T$.
\end{proof}

We can now generalize part of Theorem~5.2 of \cite{diswe} (see also Conjecture~5.6 there)  and produce a converse to our Theorem~\ref{mainequithm} in the case of maps from $(\C ^3,0)$ to $(\C ^4,0)$.
\begin{theorem}
Suppose that $f:(\C ^3,0)\to (\C ^4,0)$ is a corank $1$ mono-germ with an isolated instability at $0$ and that $F$ is a good one-parameter unfolding of $f$ that is stratified by stable type outside of $S$ and $T$.

The sequence $\left( \mu  _I^1(f), \mu  _I^2(f), \mu  _I^3(f), \mu  _I^4(f)  \right) $ is constant in the family if and only if the image of $F$ is Whitney equisingular along the parameter axis $T$.
\end{theorem}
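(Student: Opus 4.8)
The forward implication requires no new argument: for $n=3$ the sequence $\left(\mu _I^1(f_t),\dots ,\mu _I^{n+1}(f_t)\right)$ is exactly $\left(\mu _I^1(f_t),\mu _I^2(f_t),\mu _I^3(f_t),\mu _I^4(f_t)\right)$, so its constancy gives Whitney equisingularity of the image directly from Theorem~\ref{mainequithm}. Hence all the content lies in the converse, and the plan is to reverse the chain of implications used there. Assume the image of $F$ is Whitney equisingular along $T$. By Proposition~\ref{equi-equiv-result} the sequence $\left(\mu ^1(g_t),\mu ^2(g_t),\mu ^3(g_t),\widetilde{\chi }(\mf (g_t))\right)$ is constant in the family. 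Invoking the identity $\mu ^i(g_t)=\mu _I^i(f_t)$ for $i\le n=3$ (the generalization of Section~4 of \cite{damon-highmult} already used in the proof of Theorem~\ref{mainequithm}), constancy of the first three entries yields that $\mu _I^1(f_t),\mu _I^2(f_t),\mu _I^3(f_t)$ are constant. Everything therefore reduces to deducing the constancy of $\mu _I^4(f_t)=\mu _I(f_t)$ from the constancy of $\widetilde{\chi }(\mf (g_t))$ together with that of the three lower image Milnor numbers.

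The link between $\widetilde{\chi }(\mf (g_t))$ and $\mu _I^4(f_t)$ is supplied by Theorem~\ref{c3toc4thm}. Substituting $\chi (\dis (f_t))=1+(-1)^n\mu _I^{n+1}(f_t)=1-\mu _I^4(f_t)$ (Proposition~\ref{prop:strata}) into that theorem writes $\chi (\mf (g_t))=1-\mu _I^4(f_t)+\notemp _2-\Psi (f_t)$, where $\Psi (f_t)$ is the combination of $\mu (D^2),\mu (D^2|H),\mu (D^3),\mu (D^3|H_1),\notemp _3$ and $Q$ occurring on the right-hand side of Theorem~\ref{c3toc4thm}; it measures the total transverse contribution of the positive-codimension strata of the disentanglement. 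I would first record that the non-emptiness indicators $\notemp _2,\notemp _3,\notemp _4$ are constant along $T$: since $F$ is good and is stratified by stable type, precisely the same collection of stable types occurs at every parameter value, so no multiple point space can pass between empty and non-empty. It then suffices to prove that $\Psi (f_t)$ is constant, since $\chi (\mf (g_t))$ is.

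To control $\Psi (f_t)$ the plan is to exploit the slicing interpretation of the lower image Milnor numbers. A generic $3$-plane section realises $\dis (f_t)\cap H^3$ as the disentanglement of the corank~$1$ slice map $(\C ^2,0)\to (\C ^3,0)$, and similarly for $H^2$ and $H^1$; the multiple point spaces of these slice maps are generic linear sections of the $D^k(f_t,\PP )$, and the positive-codimension strata $S_2,S_{1,1,1},S_{1,2},S_{1,1,1,1}$ together with their transverse Milnor fibres are exactly the data these sections detect. Feeding the strata computations of Propositions~\ref{prop:strata} and~\ref{prop:relstrata} through the $\C ^2\to \C ^3$ formulae (the first half of \cite{diswe}) expresses $\mu _I^1(f_t),\mu _I^2(f_t),\mu _I^3(f_t)$ in terms of the generic-section Milnor numbers of $D^2$ and $D^3$, so their constancy fixes those section invariants. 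One then upgrades constancy of the section invariants to constancy of the top Milnor numbers $\mu (D^2),\mu (D^2|H),\mu (D^3),\mu (D^3|H_1),Q$ themselves by a L\^e--Greuel relation, whose polar correction term is governed by the very higher multiplicities $\mu ^i(g_t)$ that Whitney equisingularity keeps constant. With all these Milnor numbers constant, $\Psi (f_t)$ is constant; equivalently one has shown every $\mu (D^k(f_t,\PP ))$ constant and may close the argument through Lemma~\ref{lem:mui-const} and Proposition~\ref{prop:strata}.

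The hard part is exactly this last upgrade. There are more multiple point Milnor numbers than available scalar constancies, so the step cannot be pure linear algebra; the delicate point is to show that fixing the section invariants together with the polar/higher-multiplicity data forces the top Milnor numbers, converting the upper-semicontinuity inequalities for the family of isolated complete intersection singularities $D^k(f_t,\PP )$ into equalities. This is where the restriction to $n=3$, $p=4$ is genuinely used, since there the strata and their transverse Milnor fibres are few enough, and the explicit formulae of Section~\ref{sec:3to4} precise enough, to carry the inversion through.
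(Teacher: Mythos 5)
Your forward implication and the opening reduction of the converse (Proposition~\ref{equi-equiv-result} plus the identity $\mu ^i(g_t)=\mu _I^i(f_t)$ for $i\le 3$) follow the paper, but the proof as proposed has genuine gaps. The first is a hypothesis mismatch: Proposition~\ref{equi-equiv-result} and Lemma~\ref{lem:mui-const} are stated for \emph{excellent} unfoldings, whereas the theorem assumes only that $F$ is \emph{good}. The paper begins the converse by closing exactly this gap: since $T$ is a Whitney stratum, no curve of zero-dimensional stable singularities can come in to the origin, so condition (iv) of Definition~\ref{defn:excellent} holds and a good, Whitney equisingular unfolding is automatically excellent. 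Without this observation your citation of Proposition~\ref{equi-equiv-result} is not licensed. Relatedly, your claim that the indicators $\notemp _2,\notemp _3,\notemp _4$ are constant because ``precisely the same collection of stable types occurs at every parameter value'' begs the question: the $\notemp _k$ refer to the multiple point spaces of the disentanglements $\widetilde{f}_t$, not to strata of $F$, a stratum of $F$ need not meet every slice, and whether stable types can die at $t=0$ is precisely what excellence and the semicontinuity result of \cite{excellent} are needed to control.

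The more serious gap is the step you yourself flag as ``the hard part'': upgrading constancy of $\mu _I^1,\mu _I^2,\mu _I^3$ and of $\widetilde{\chi }(\mf (g_t))$ to constancy of the individual Milnor numbers $\mu (D^2)$, $\mu (D^2|H)$, $\mu (D^3)$, $\mu (D^3|H_1)$ and $Q$. Your proposed route through generic plane sections and a L\^e--Greuel-type relation is never carried out, and, as you note, there are more unknowns than available scalar constancies, so there is no reason the proposed inversion closes. The paper needs no inversion at all. In Corollary~\ref{c3toc4cor} every one of these quantities enters $\chi (\mf (g_t))$ with a coefficient of the same sign; each Milnor number is upper semi-continuous in the family (\cite{looijenga} p.~126); $\notemp _3$ and $Q$ are upper semi-continuous because $D^k(\widetilde{f}_t)\neq \emptyset $ for $t\neq 0$ forces $D^k(\widetilde{f}_0)\neq \emptyset $ (proof of Theorem~4.3 of \cite{excellent}); and the single term with the opposite sign, $\notemp _2$, is handled by a separate ad hoc argument: a jump in $\notemp _2$ would mean that on one side of the family the disentanglement has empty $D^2$, making the map an embedding, hence making all nearby maps embeddings, a contradiction. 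Once $\notemp _2$ and $\chi (\mf (g_t))$ are constant, constancy of a positively weighted sum of upper semi-continuous non-negative terms forces each term to be constant, and Lemma~\ref{lem:mui-const} then gives constancy of $\mu _I^4(f_t)$. This sign-plus-semicontinuity argument is the key idea your proposal is missing, and it is also the true reason the restriction to $n=3$ helps: it is the explicit formula of Corollary~\ref{c3toc4cor}, with all Milnor-number terms of one sign, that makes the deduction immediate.
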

\begin{proof}
The `only if' part follows from Theorem~\ref{mainequithm}. For the converse suppose that the image of $F$ is Whitney equisingular along $T$.

Since $T$ is a Whitney stratum there cannot exist a curve arising from zero-dimensional singularities in the image of $f_t$ causing another $1$-dimensional stratum to meet at the origin. Therefore, as $F$ is good, it must also be excellent.

Next, $\left( \mu ^1 (g_t), \dots , \mu _n(g_t), \widetilde{\chi }  \left( \mf (g_t) \right) \right) $ is constant by Proposition~\ref{equi-equiv-result} and as stated in the previous theorem $\mu ^i(g_t)=\mu ^i_I(f_t)$ for all $1\leq i\leq n$.

Corollary~\ref{c3toc4cor} shows that we can calculate $\chi (\mf (g_t))$ in terms of $\notemp _2$, $\notemp _3$, 
$ \mu (D^2) $, $\mu (D^2|H) $, $\mu (D^3)$, $\mu (D^3|H_1)$ and $Q$ for each $f_t$. 

The fact that $D^k(\widetilde{f} _t) \neq \emptyset $ for $t\neq 0$ implies that $D^k(\widetilde{f} _0) \neq \emptyset $ is shown in the proof of Theorem~4.3 of \cite{excellent}. This fact in turn implies that $\notemp _k$ is upper semi-continuous.

Suppose that $\notemp _2$ increases at $t=0$. Then $D^2(\widetilde{f} _0)=\emptyset $, i.e., $\widetilde{f}_0$ is not singular and has no ordinary double points. This means that $f_0$ is non-singular. But if $f_0$ is non-singular, then so is any nearby deformation,  i.e., $D^2(\widetilde{f} _t)=\emptyset $ for all $t$. This contradicts an increase in $\notemp _2$. Hence $\notemp _2$ is constant.

As the Milnor number of an isolated complete intersection is upper semi-continuous (\cite{looijenga} p126) we deduce that all the other terms in the right-hand side of Corollary~\ref{c3toc4cor} are constant in the family. From Lemma~\ref{lem:mui-const} we can deduce that $\mu _I^{n+1}(f_t)=\mu _I(f_t)$ is constant.
\end{proof}
\begin{remark}
Similar reasoning gives a proof of the same statement for the $\C ^2$ to $\C ^3$ case, a multi-germ version of which is given in \cite{diswe}. 
\end{remark}

\section{Concluding remarks}
We have seen that Theorem~\ref{maineulerthm} generalizes work of Massey and Siersma, answers a long-standing question involving the relationship between the disentanglement and the Milnor fibre of the image of a map-germ from $(\C ^n,0)$ to $(\C ^{n+1},0)$, and solves a significant problem in equisingularity.

A number of avenues remain to be explored:

One reason we could not, in general, reverse the implication in Theorem~\ref{mainequithm} to get that  $\mu _I^*$ constant implies Whitney equisingularity of the image is because we do not know the Euler characteristic of the Milnor fibre of the image of a stable corank $1$ mono-germ from $(\C ^n,0)$ to $(\C ^{n+1},0)$. This is because this number is a crucial ingredient in Theorem~\ref{dis-formula} which is used in the proof of Corollary~\ref{c3toc4cor} for the $\C ^3$ to $\C ^4$ case.

For a minimal corank $1$ stable map from $\C ^n$ to $\C ^{n+1}$ it should be noted that as $n$ gets larger the number of monomials in the defining function for the image becomes explosively larger. It may be worth observing that using the techniques in \cite{mond-pell} we can write the function as the determinant of a matrix. 
Also we may be able to use Massey's work on L\^e numbers. It may be possible to just calculate one L\^e number and use induction. This is because for the image of $f$ we can restrict to a generic hyperplane through the origin. This allows us to calculate one L\^e number that relates the Euler characteristic of the Milnor fibre of the image to the Euler characteristic of the Milnor fibre of the restriction of the image to the hyperplane. This latter can be calculated since the image is the image of an $\AA _e$-codimension one map. Its disentanglement is homotopically equivalent to a single sphere. The Euler characteristic of each stratum is easy to compute using the multiple point spaces and by the inductive hypothesis we know the Euler characteristic of transverse Milnor fibres arising from mono-germs.
We would also need to know the precise relationship in Theorem~\ref{linsum} but this appears to be just a combinatorial issue.

Another point not mentioned is that we would like the map $F$ to be equisingular along $S$ and $T$ rather than just the image of $F$ equisingular along $T$. Gaffney has sent the author a personal communication outlining how for corank $1$ maps from $(\C ^n,0)$ to $(\C ^{n+1},0)$ the equisingularity of the image of $F$ along $T$ automatically implies the equisingularity of $F$ along $S$ and $T$. However, this has yet to be published and hence Theorem~\ref{mainequithm} has not included this result in the statement. A simple proof of his result would be welcomed.

\end{document}